%-----------------------------------------------------------------------
% Beginning of article-template.tex
%-----------------------------------------------------------------------

\documentclass{amsproc}

%    If you need symbols beyond the basic set, uncomment this command.
\usepackage{amssymb}

%    If the article includes commutative diagrams, ...
%%\usepackage[cmtip,all]{xy}
\usepackage[arrow,matrix,cmtip,all]{xy}

%    Include other referenced packages here.
\usepackage{url}
\usepackage[mathscr]{eucal}
\usepackage{hyperref}

%    Update the information and uncomment if AMS is not the copyright
%    holder.
%\copyrightinfo{2009}{American Mathematical Society}

\let\wt=\widetilde \let\:=\colon \let\ox=\otimes
\let\?=\overline \let\ts=\textstyle
\def\tq#1{\text{\quad#1\quad}}
 \newcommand{\CC}{\mathbb{C}}

\newcommand{\PP}{\mathbb{P}}
 \newcommand{\VV}{\mathbb{V}}
 \newcommand{\ZZ}{\mathbb{Z}}

\newcommand{\Ll}{\mathbf{L}}

\newcommand{\Tt}{\mathbf{T}}
\def\Cc{\mathbf{C}}

\let\mathcal=\mathscr
\newcommand{\aA}{\mathcal{A}}
\newcommand{\cC}{\mathcal{C}}
\newcommand{\fF}{\mathcal{F}}
\newcommand{\jJ}{\mathcal{J}}
\newcommand{\hH}{\mathcal{H}}
 \newcommand{\lL}{\mathcal{L}}
 
\newcommand{\nN}{\mathcal{N}}
 \newcommand{\oO}{\mathcal{O}}
 \newcommand{\rR}{\mathcal{R}}

\def\cln{\smash{\cC_{|\lL|}^{[n]}}}

\def\(#1){{\rm(#1)}}
\let\into=\hookrightarrow
\let\onto=\twoheadrightarrow
\let\xto\xrightarrow
\def\Into{\lhook\joinrel\longrightarrow}
\def\smashedlongrightarrow{\setbox0=\hbox{$\longrightarrow$}\ht0=1pt\box0}
\def\risom{\buildrel\sim\over{\smashedlongrightarrow}}
%% a fancy emdash

\def\Dfl{\mathop{{\rm Def}_{\rm loc}}}
\def\Def{\mathop{\rm Def}}

\def\Ker{\mathop{\rm Ker}} \def\Im{\mathop{\rm Im}}
\def\Coker{\mathop{\rm Coker}}
\def\Hom{\mathcal{H}om}
\def\red{{\mathop{\rm red}}}
\def\Spec{\mathop{\rm Spec}}
\def\Supp{\mathop{\rm Supp}}

   \DeclareMathOperator\mult{mult}
 \DeclareMathOperator\rHom{Hom}  \DeclareMathOperator\Ext{Ext}
\DeclareMathOperator{\ecH}{H}
\DeclareMathOperator{\codim}{codim}

%% THEOREM-LIKE ELEMENTS
\newtheorem{lemma}{Lemma}
\newtheorem{proposition}[lemma]{Proposition}

\newtheorem{corollary}[lemma]{Corollary}
\newtheorem{theorem}[lemma]{Theorem}

\theoremstyle{definition}
 \newtheorem{remark}[lemma]{Remark}

\numberwithin{equation}{lemma}

 \widowpenalty=0 \clubpenalty=0 \displaywidowpenalty=0

\begin{document}

% \title[short text for running head]{full title}
\title{On the G\"ottsche Threshold}

%    Only \author and \address are required; other information is
%    optional.  Remove any unused author tags.

%    author one information
% \author[short version for running head]{name for top of paper}
\author[S. Kleiman]{Steven L. Kleiman}
 \address
 {Dept. of Math., 2-278 MIT\\
 77 Mass. Ave.\\
 Cambridge, MA 02139, USA}
 \email{Kleiman@math.MIT.edu}

%    author two information
\author[V. Shende]{Vivek V. Shende}
 \address
 {Dept. of Math., MIT 2-248\\
 77 Mass. Ave.\\
 Cambridge, MA 02139, USA}
 \email{Vivek@math.MIT.edu}
 \thanks{Vivek Shende gratefully acknowledges the support of the Simons
Foundation.}

%    author three information
\author[with an appendix by I. Tyomkin]{\\with an appendix by Ilya Tyomkin}
 \address
 {Department of Mathematics, Ben-Gurion University of the Negev, P.O.Box
653, Be'er Sheva, 84105, Israel}
 \email{tyomkin@cs.bgu.ac.il}
\thanks{Ilya Tyomkin gratefully acknowledges the support of the European
Union Seventh Framework Programme (FP7/2007-2013) under
grant agreement 248826}

\subjclass[2010]{Primary 14N10; Secondary 14C20, 14M20}

\keywords{G\"ottsche Threshold, Severi variety, Enumeration of nodal curves}

\date{}

\begin{abstract}
For a line bundle $\lL$ on a smooth surface $S$, it is now known that
the degree of the Severi variety of cogenus-$\delta$ curves is given by
a universal polynomial in the Chern classes of $\lL$ and $S$ if $\lL$ is
$\delta$-very ample.  For $S$ rational, we relax the latter condition
substantially: it suffices that three key loci be of codimension more
than $\delta$.  As corollaries, we prove that the condition conjectured
by G\"ottsche suffices if $S$ is $\PP^2$ or $S$ is any Hirzebruch
surface, and that a similar condition suffices if $S$ is any classical
del Pezzo surface.
\end{abstract}

\maketitle

%    Text of article.
%%\vspace{-2pt}
\section{Introduction}\label{Intro}
Fix $\delta\ge0$.  Fix a smooth irreducible projective complex surface
$S$, and a line bundle $\lL$.  Let $|\lL|$ be the complete linear
system, and $|\lL|^{\delta}\subset |\lL|$ the Severi variety, the locus
of reduced curves $C$ of \textit{cogenus} $\delta$; so $\delta$ is the
genus drop, $\delta:=p_aC-p_gC$, or $\delta=\chi(\oO_{\wt
C})-\chi(\oO_C)$ where $\wt C$ is the normalization.  Let
$|\lL|^{\delta}_+\subset |\lL|^\delta$ be the sublocus of $\delta$-nodal
curves.  Often enough when $S$ is rational, $|\lL|^\delta_+$ is open and
dense in $|\lL|^\delta$, so that $\deg |\lL|^{\delta}_+ = \deg
|\lL|^{\delta}$; see Prp.~\ref{prNC} below.

The degree $\deg|\lL|^\delta_+$ can be found recursively if $S$ is the
plane \cite[Thm.\,3C.1]{R}, \cite[Thm.\,1.1]{CH}, if $S$ is any
Hirzebruch (rational ruled) surface \cite[\S\,8]{Va}, or if $S$ is any
classical del Pezzo surface (that is, its anticanonical bundle is very
ample) \cite[\S\,9]{Va}.  If $\delta$ and $S$ are arbitrary, but $\lL$ is
sufficiently ample, then by \cite{L,L04}, by \cite{T}, or by \cite{KST},
there's a universal polynomial $G_\delta(S,L)$ in the Chern classes of
$S$ and $\lL$ with
\begin{equation}\label{eq1}
 \deg |\lL|^{\delta}_+ = G_\delta(S,\lL).\tag{$+$}
\end{equation}

Further, set $r:=\dim|\lL|$.  In those cases, $\deg|\lL|^\delta_+$ is
the number of $\delta$-nodal curves through $r-\delta$ general points,
and each curve is counted with multiplicity 1 by \cite[Lem.\,(4.7)]{KP}.
See \cite{K11} for a brief survey of related work and open problems.

{\em Given $\delta$ and $S$, for precisely which $\lL$ does
\eqref{eq1} hold?}  It is known \cite[Thm.\,4.1]{KST} that
\eqref{eq1} holds if $\lL$ is $\delta$-very ample, that is if, for any
subscheme $Z \subset S$ of length $\delta+1$, the restriction map
$\ecH^0(\lL)\to \ecH^0(\lL|_Z)$ is surjective.  In particular, \eqref{eq1}
holds for $S=\PP^2$ and $\lL=\oO(d)$ if $d\ge\delta$.  Previously, this
bound had been confirmed by F. Block \cite[Thm.\,1.3]{Bl}, who also
coined the term {\it G\"ottsche threshold\/} for the value of $d$ at
which \eqref{eq1} begins to hold.  However, as conjectured by G\"ottsche
\cite[Cnj.\,4.1, Rmk.\,4.4]{G} and proved by Block \cite[Thm.\,1.4]{Bl}
for $\delta =3,\dotsc,14$, in fact the threshold appears to be
$\lceil\delta/2\rceil+1$ if $\delta\ge3$; whereas, it is $1$ if
$\delta=0,1,2$.  G\"ottsche \cite[Rmk.\,4.3, 4.4]{G} also conjectured a
value for the threshold if $S$ is any Hirzebruch surface.

Here we prove G\"ottsche's conjectured value is at least an upper bound on
the threshold if $S$ is $\PP^2$ or if $S$ is any Hirzebruch surface, and
we prove a similar bound if $S$ is any classical del
 Pezzo surface; see Cors.\,\ref{coP2},\,\ref{coH},\,\ref{codP} and
Rmk.\,\ref{reH} stated just below.  Although we cannot say exactly when
the bound is tight, in Rmk.\,\ref{reH} we show it isn't if $S$ is the
first Hirzebruch surface, the blowup of $\PP^2$ at a point.  We derive
those results directly from our main results, Thm.\,\ref{ththreshold}
and Prp.\,\ref{prNC}, stated next.

Note that the term immersed is used here in the sense of differential
geometry; specifically, we call an embedded curve $D\subset S$ {\it
immersed\/} if $D$ is reduced and the tangent map $T_{\wt D}\to T_S$ is
injective, where $\wt D$ is the normalization.

\begin{theorem}\label{ththreshold}
 Assume $S$ is rational with canonical class $K$.  Let $V$ be a closed
subset of $|\lL|$ that contains every $D\in |\lL|$ such that either
\begin{enumerate}
 \item  $D$ is nonreduced, or
 \item $D$ has a component $D_1$ with $-K\cdot D_1\le 0$, or
 \item $D$ has a nonimmersed component $D_1$ with $-K\cdot D_1= 1$.
 \end{enumerate} Then the closure of\/ $|\lL|^\delta-V$ has codimension
$\delta$ at all its points (if any), and its sublocus of immersed curves
is open and dense, and is smooth off\/ $V$.  Further, if $\codim V
>\delta$, then $|\lL|^\delta$ has codimension $\delta$ at all its
points, and $\deg |\lL|^{\delta} = G_\delta(S,\lL)$.  \end{theorem}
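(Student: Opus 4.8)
The plan is to reduce both halves of the statement to a local study of $|\lL|^\delta$ near a curve $C\notin V$, the global assertions then being routine arguments with closures. Write $\nu\:\wt C\to S$ for the normalization $\wt C\to C$ followed by the inclusion $C\into S$, let $N$ be the cokernel of $d\nu\:T_{\wt C}\to\nu^*T_S$, and let $\overline N$ be its locally free quotient $N/(\text{torsion})$. Since $C\notin V$, it is reduced by~(1), and by~(2) every component $C_i$ of $C$ has $-K\cdot C_i\ge1$; as the part $\wt C_i$ of $\wt C$ over $C_i$ satisfies $\deg N|_{\wt C_i}=-K\cdot C_i-2+2g_i$ with $g_i$ its genus, if $C_i$ is moreover immersed then $\deg\overline N|_{\wt C_i}=\deg N|_{\wt C_i}\ge2g_i-1$, whence $\uH^1(\wt C,\overline N)=0$. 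The first key point is that this vanishing makes the space of maps deforming $\nu$ (with the source free to vary in its genera) smooth at $[\nu]$ of dimension $\chi(\overline N)$, which a Riemann--Roch computation identifies with $r-\delta$ (which uses $\uH^1(\lL)=\uH^2(\lL)=0$, valid here since $\lL$ has a reduced member off $V$); and the morphism sending a parametrized curve to its image is finite onto a neighbourhood of $[C]$ in $|\lL|^\delta$. Hence $|\lL|^\delta$ is smooth of codimension $\delta$ at every immersed $C\notin V$, and every member of $|\lL|^\delta$ near such a $C$ is again immersed.

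The second, and main, point is that immersed curves are dense in every component $Y$ of $|\lL|^\delta$ not contained in $V$. Granting it, the first two assertions follow: a general $C\in Y$ lies off $V$ and is immersed, so $Y$ has codimension $\delta$; thus $\overline{|\lL|^\delta-V}$ is pure of codimension $\delta$, and its open immersed sublocus is dense and is smooth at its points off $V$. To prove the density I would argue by contradiction. If some such $Y$ had general member $C$ with a non-immersion point, then its components satisfy $-K\cdot C_i\ge1$, and those $C_i$ that are non-immersed satisfy $-K\cdot C_i\ge2$ since~(3) is excluded; I would then combine two ingredients. Locally: the classical fact that the general member of any equigeneric stratum of the semiuniversal deformation of a plane curve singularity is nodal. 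Globally: the cohomology vanishing furnished by~(2) and~(3), which makes the natural map from the germ of $|\lL|^\delta$ at $[C]$ to the product, over the singular points of $C$, of the equigeneric deformation strata dominant. Together these force the general member of $Y$ to have only nodes where $C$ was singular, contrary to the choice of $C$. I expect the hard part to be exactly this global vanishing in the borderline case where some $-K\cdot C_i$ equals $2$ while $C_i$ carries a complicated singularity: there $\overline N$ itself may have nonvanishing $\uH^1$, and one must instead simplify the singularities of $C$ in stages, each step needing only the weaker vanishing that~(2) and~(3) do guarantee.

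Finally, suppose $\codim V>\delta$. Every component of $|\lL|^\delta$ has codimension at most $\delta$ (a standard bound on Severi varieties), hence meets the complement of $V$ densely, and so by the above has codimension exactly $\delta$; thus $|\lL|^\delta$ is pure of codimension $\delta$ with the general member of each component immersed. By Proposition~\ref{prNC} the $\delta$-nodal locus $|\lL|^\delta_+$ is then dense in $|\lL|^\delta$, so $\deg|\lL|^\delta=\deg|\lL|^\delta_+$, and this common value is the number of $\delta$-nodal members of $|\lL|$ through $r-\delta$ general points of $S$ (a general codimension-$(r-\delta)$ linear section of the reduced variety $\overline{|\lL|^\delta_+}$ consisting of that many reduced points, each a general member of its component). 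It then remains to identify this number with $G_\delta(S,\lL)$, for which I would rerun the Hilbert scheme computation of~\cite{KST}: it expresses the number as a tautological integral over $\cln$ (for a suitable $n$), hence as a universal polynomial in the Chern classes of $S$ and $\lL$, and the only place that argument uses $\delta$-very ampleness is to secure precisely the transversality that $\codim V>\delta$ now provides. Routine bookkeeping with closures then yields Theorem~\ref{ththreshold} as stated.
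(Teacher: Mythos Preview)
There is a genuine gap in the final step. You invoke Proposition~\ref{prNC} to pass from $\deg|\lL|^\delta$ to $\deg|\lL|^\delta_+$, but Proposition~\ref{prNC} requires the additional hypotheses (4)--(10) on $V$, none of which are assumed in Theorem~\ref{ththreshold}. Under (1)--(3) alone the nodal locus need not be dense in $|\lL|^\delta$, so this reduction is unavailable. The paper avoids the issue entirely: it computes $\deg|\lL|^\delta$ directly, never passing through $|\lL|^\delta_+$. One chooses a general $\delta$-plane $\PP\subset|\lL|$ meeting $|\lL|^\delta$ only in immersed curves and avoiding $V$; Lemma~\ref{prsmooth} makes $\cC^{[n]}_\PP$ smooth for $n\le g$, so the BPS numbers $n_h(\PP)$ are universal polynomials in the Chern classes; and the key input is that $n_{\wt g}(D)=1$ for every \emph{immersed} curve $D$ (via \cite[Eqn.\,5]{S} and \cite[Prp.\,3.3]{B}, or via smoothness of $|\lL|^\delta$ at $D$ and \cite[Thm.\,A]{S}). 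Nodality is simply not needed for the degree statement.

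Your density argument has a related problem. You propose to show immersed curves are dense by proving the map from the germ of $|\lL|^\delta$ at $C$ to the product of local equigeneric strata is dominant, and then invoking the classical fact that the general member of each local equigeneric stratum is nodal. But if that dominance held under (1)--(3), you would conclude that the general member of $|\lL|^\delta-V$ is nodal, which is exactly what requires the extra hypotheses (4)--(10); so the dominance you claim cannot follow from (1)--(3) alone. Concretely, hypotheses (2)--(3) only guarantee surjectivity of $\ecH^0(\lL)\to\ecH^0(\oO_C/\aA)$, i.e.\ transversality of $|\lL|\to\Dfl(C)$ to the equigeneric stratum, not that the image of $T_C|\lL|$ covers the tangent cone $\ecH^0(\aA/\jJ)$ of that stratum. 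The paper instead uses the equiclassical locus $|\lL|^\delta_\kappa$ and the Diaz--Harris identification of its reduced tangent cone with $n_*(\jJ\oO_{\wt C})/\jJ$ (Lemma~\ref{leimmersed}): if a component $C_1$ with $-K\cdot C_1\ge2$ were non-immersed, one could build a one-parameter equigeneric family whose tangent direction exits the equiclassical cone, contradicting openness of $|\lL|^\delta_\kappa$ in $|\lL|^\delta$. This yields precisely immersedness, no more.

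Your treatment of the codimension and smoothness at immersed $C\notin V$ via the normal sheaf of $\nu$ is fine and is essentially the appendix's approach; but for the two points above you need the specific tools the paper deploys.
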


\begin{proposition}\label{prNC} Under the conditions of
Thm.\,{\rm\ref{ththreshold}}, assume $D\in V$ also if either
\begin{enumerate}\setcounter{enumi}{3}
 \item $D$ has a component $D_1$ with a point of multiplicity at least
$3$ and with $-K\cdot D_1\le 3$, or
  \item $D$ has two components $D_1,\, D_2$ with a common point that is
double on $D_1$ and with $-K\cdot D_1=1$ or $-K\cdot D_2=1$, or
  \item $D$ has two components $D_1,\, D_2$ with a common point that is
double on $D_1$ and on $D_2$ and with $-K\cdot D_1=2$ and $-K\cdot
D_2=2$, or
  \item $D$ has two components $D_1,\, D_2$ with a common point that is
double on  $D_1$  and simple on $D_2$ and with $-K\cdot D_1=2$, or
 \item $D$ has three components $D_1,\, D_2,\, D_3$ with a common point
that is simple on each and with $-K\cdot D_1=1$, or
 \item $D$ has two components $D_1,\, D_2$ with a common point that is simple on
each and at which they are tangent and with  $-K\cdot D_1=1$, or
 \item $D$ has a component $D_1$ with a nonnodal double point and with
$-K\cdot D_1\le 2$.
 \end{enumerate}
Then in the closure of\/ $|\lL|^\delta-V$, its sublocus of nodal curves
is open and dense.  Further, if $\codim V >\delta$, then
$|\lL|^\delta_+$ is open and dense in $|\lL|^\delta$, and \eqref{eq1}
holds.
\end{proposition}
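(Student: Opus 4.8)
The plan is to leverage Theorem \ref{ththreshold} as a black box and then upgrade "immersed" to "nodal" by excluding the extra degeneracies (4)--(10). Let me think about what each condition is doing.

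Theorem \ref{ththreshold} already gives us: the closure of $|\lL|^\delta - V$ has codimension $\delta$, and its sublocus of *immersed* curves $U$ is open and dense, smooth off $V$. An immersed curve $D$ has normalization $\wt D \to D \subset S$ with injective tangent map, so the only singularities of $D$ are "ordinary" multiple-point-type singularities arising from several smooth branches crossing — but possibly with tangencies, or three or more branches through a point, or branches from different components colliding. A *nodal* curve is one where $D$ is immersed, every singular point is the crossing of exactly two branches, and those two branches are transverse. So inside $U$, I want the locus $U_{+} \subset U$ of nodal curves to be open and dense.

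So the key steps would be: (a) On $U$, nodality is an open condition (standard — the locus of non-nodal points is closed). (b) The complement $U - U_+$ is a proper closed subset, hence has codimension $\ge 1$ in $U$; but I need to know it doesn't contain any component of $\overline{|\lL|^\delta - V}$. Since $\overline{|\lL|^\delta - V}$ has pure codimension $\delta$ (by the theorem) and its immersed locus $U$ is dense, it suffices to show every component of $U$ contains a nodal curve. Equivalently: if $D \in U$ is immersed but not nodal, I must produce a nearby equisingular-genus deformation that is nodal — i.e., smooth out the tangencies, the triple points, and the multi-component collisions while keeping cogenus exactly $\delta$. This is where the dimension bookkeeping comes in. Each "bad" configuration (a tangency of two branches, a point where three branches meet, a node lying on a second component, etc.) imposes *extra* conditions beyond $\delta$ nodes accounting for the same cogenus, UNLESS the relevant component $D_1$ has very small $-K \cdot D_1$ (so that the linear system $|\lL_{D_1}|$ restricted to curves through that configuration is too small to move). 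Conditions (4)--(10) are precisely the list of configurations for which this dimension count fails — they each force a component with $-K \cdot D_1 \le 3$ (or $\le 2$, or $= 1$), and by adding them to $V$ we discard exactly those. For the remaining immersed non-nodal $D \notin V$, one shows the germ of $|\lL|^\delta$ at $D$ still contains a nodal curve by a versality/transversality argument: the condition "the branch is tangent here" or "a third branch passes here" cuts out a subvariety of the equigeneric locus of positive codimension *within that germ*, so generic points of the germ are nodal.

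The main obstacle — and the technical heart of the proof — is step (b): the local analysis at an immersed non-nodal point showing that smoothing a tangency or dissolving a higher-order multiple point into nodes is possible within the Severi variety, together with the verification that the ten listed configurations are genuinely the *complete* list of obstructions. This requires a careful local deformation-theoretic computation: for each singularity type, compute the codimension in $|\lL|$ of the locus of curves acquiring that singularity, compare with $\delta$ (the codimension of $|\lL|^\delta$), and show the surplus is $\ge 1$ precisely when the $-K$-degree bounds in (4)--(10) are violated. Presumably this runs parallel to Vakil's and Caporaso--Harris's analyses and to the arguments underlying Theorem \ref{ththreshold} itself; I would organize it as a sequence of lemmas, one per singularity type, each of the form "if $D$ has such-and-such a configuration on a component $D_1$ with $-K \cdot D_1$ large, then $D$ lies in the closure of the nodal locus." Granting those lemmas, the global statement — nodal is open and dense in $\overline{|\lL|^\delta - V}$, and $\deg|\lL|^\delta_+ = \deg|\lL|^\delta = G_\delta(S,\lL)$ when $\codim V > \delta$ — follows formally from Theorem \ref{ththreshold} together with the fact that degree is computed on the dense open nodal locus.
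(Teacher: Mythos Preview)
Your high-level strategy is correct: reduce to the immersed locus via Theorem~\ref{ththreshold}, then show that each remaining non-nodal singularity type is non-generic in the Severi stratum unless a component has small $-K$-degree. But the proposal stops exactly where the real work begins. The sentence ``Granting those lemmas\dots'' defers the entire technical content, and your sketch of how those lemmas would go (``compute the codimension in $|\lL|$ of the locus of curves acquiring that singularity, compare with $\delta$'') is not what the paper does and would not straightforwardly work: the issue is codimension \emph{within} $|\lL|^\delta$, not in $|\lL|$, and the Severi variety is not a priori smooth at the bad points, so a naive transversality count is unavailable.

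The paper's mechanism is specific and not visible from your outline. First, one passes to the dense open locus $|\lL|^\delta_\mu \subset |\lL|^\delta$ where the total Milnor number is locally constant; by Zariski/L\^e--Ramanujam/Teissier this produces an \emph{equisingular section} through any chosen singular point $P$ of a general $C$. Blowing up $S$ at $P$ and comparing $\dim_C|\lL|^\delta$ with $\dim_{C'}|\lL'|^{\delta'}$ for the strict transform, the equisingular section bounds the drop by~$2$. The reduced-tangent-cone containment of Lemma~\ref{prcodim} converts this into a cohomological inequality, which via Lemma~\ref{leVan} becomes the combinatorial bound
\[
\textstyle\sum_i s_i \le 2,\qquad s_i := m_i - \max(0,\,1-k_i+m_i),\quad m_i:=\mult_P C_i,\quad k_i:=-K\cdot C_i.
\]
A short case analysis on this inequality is exactly what generates the list (4)--(8) and forces $m\le 2$. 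The case $m=2$ (ruling out $A_{2k-1}$ with $k\ge 2$) then needs a separate argument: one exhibits a first-order equigeneric deformation not lying in the equisingular ideal, using surjectivity of a restriction map guaranteed by conditions (9)--(10). None of this machinery --- the Milnor-constant stratum, the equisingular section, the blowup dimension comparison, the $\sum s_i\le 2$ bound --- appears in your proposal, and it is the actual content of the proof.
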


\begin{corollary} \label{coP2}
Assume $S=\PP^2$ and $\lL=\oO(d)$.  If $d\ge \lceil\delta/2\rceil+1$, then
\eqref{eq1} holds.
\end{corollary}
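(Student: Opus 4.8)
The plan is to derive Corollary \ref{coP2} directly from Proposition \ref{prNC}; the only work is to pin down the exceptional locus $V$ when $S=\PP^2$ and to compute its codimension.

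First I would record that on $\PP^2$ the canonical class is $K=\oO(-3)$, so every curve $D_1\subset\PP^2$ of degree $e\ge 1$ has $-K\cdot D_1=3e\ge 3$. Hence conditions (2), (3) and (5)--(10) in Theorem \ref{ththreshold} and Proposition \ref{prNC} are vacuous: each of them requires a component $D_1$ with $-K\cdot D_1\le 2$. Condition (4) requires $-K\cdot D_1\le 3$, hence that $D_1$ be a line, together with a point of $D_1$ of multiplicity at least $3$ --- impossible, a line being smooth. So the only surviving requirement is condition (1), and I may take $V\subset|\oO(d)|=|\lL|$ to be the closed locus of nonreduced degree-$d$ plane curves.

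Next I would compute $\codim V$. Writing a nonreduced form as $G^2H$ with $\deg G=e\ge 1$ exhibits $V$ as the union of the finitely many images $Z_e$ of the maps $|\oO(e)|\times|\oO(d-2e)|\to|\oO(d)|$. The generic fibre of $Z_1\to|\oO(1)|\times|\oO(d-2)|$ is a single point, so $\dim Z_1=2+\dim|\oO(d-2)|=2+\binom{d}{2}-1$, and thus $Z_1$ has codimension $\binom{d+2}{2}-\binom{d}{2}-2=2d-1$ in $|\oO(d)|$ (with $Z_1=\emptyset$ when $d\le 1$). A routine convexity estimate gives $\dim Z_e<\dim Z_1$ for $e\ge 2$, so $\codim V=2d-1$, read as $+\infty$ when $d\le 1$.

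Finally I would check that $d\ge\lceil\delta/2\rceil+1$ forces $2d-1\ge\delta+1>\delta$ (the even and odd cases of $\delta$ are immediate), so $\codim V>\delta$, whereupon Proposition \ref{prNC} yields \eqref{eq1}. I do not expect a real obstacle here: all the substance lies in Theorem \ref{ththreshold} and Proposition \ref{prNC}, and the corollary is merely the numerical form of the inequality $\codim V>\delta$. The points needing care are the case-check that no bad configuration other than a repeated component survives on $\PP^2$, and the verification that the double-line family $\{2L+C'\}$ is genuinely the codimension-minimizing stratum of the nonreduced locus.
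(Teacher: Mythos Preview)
Your proposal is correct and follows essentially the same route as the paper: check that on $\PP^2$ conditions (2)--(10) are vacuous (the paper phrases (4) slightly differently, noting $-K\cdot\Gamma\ge 9$ for singular $\Gamma$, but your ``a line is smooth'' is equivalent), take $V$ to be the nonreduced locus, show its minimal-codimension stratum is the double-line family with $\codim=2d-1$, and verify $2d-1>\delta$. The only cosmetic differences are that the paper invokes Lem.~\ref{lecdB} for $\dim|\oO(k)|$ and writes out the inequality $b(4d-5b+3)/2\ge 2d-1$ explicitly (your ``routine convexity estimate''), and that your sentence about the ``generic fibre of $Z_1\to|\oO(1)|\times|\oO(d-2)|$'' has the arrow pointing the wrong way.
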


\begin{corollary} \label{coH}
Assume $S$ is the Hirzebruch surface with section $E$ of
self-intersection $-e$ with $e\ge0$.  Assume these
subloci of $|\lL|$ have codimension more than $\delta$: \(1) the
nonreduced curves, \(2) if $e \ge 1$, the curves with $E$ as a
component.  Then \eqref{eq1} holds.
\end{corollary}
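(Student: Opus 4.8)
The plan is to deduce the corollary from Proposition~\ref{prNC}, applied to the closed subset $V\subset|\lL|$ consisting of the nonreduced curves together with, when $e\ge1$, the curves having $E$ as a component. The nonreduced curves form a closed subset of codimension more than $\delta$ by hypothesis, and the curves containing $E$ form the linear subspace $E+|\lL-E|$, again of codimension more than $\delta$ by hypothesis; so $V$ is closed of codimension more than $\delta$. Since a Hirzebruch surface is rational, Proposition~\ref{prNC} will give \eqref{eq1} provided $V$ contains each of the loci (1)--(10) figuring in Theorem~\ref{ththreshold} and Proposition~\ref{prNC}. Locus (1) lies in $V$ by construction, and we may assume throughout that $D$ is reduced, since otherwise $D\in V$; so it remains to treat (2)--(10).

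The main step is a classification of the irreducible curves $D_1\subset S$ of small anticanonical degree, which I would carry out as follows (taking $e\ge1$; the case $e=0$ is similar and easier, since there every irreducible curve of anticanonical degree at most $3$ is a fiber of one of the two rulings and so is smooth rational of anticanonical degree $2$). The Picard group of $S$ is freely generated by $E$ and by the class $F$ of a fiber of the ruling, with $E^2=-e$, $F^2=0$, $E\cdot F=1$, and $K=-2E-(e+2)F$; so by adjunction $-K\cdot E=2-e$ and $-K\cdot F=2$. If $D_1\ne E$ is irreducible of class $aE+bF$, then $a\ge0$, and $a\ge1$ forces $b\ge ae$ (otherwise $D_1\cdot E<0$), whence $-K\cdot D_1=a(2-e)+2b\ge a(e+2)\ge e+2$ when $a\ge1$, while $-K\cdot D_1=2b$ when $a=0$. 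Therefore the irreducible $D_1$ with $-K\cdot D_1\le3$ are exactly these: $E$ itself, with $-K\cdot E=2-e\le1$; the fibers, with $-K\cdot D_1=2$; and, only when $e=1$, the irreducible members of $|E+F|$, with $-K\cdot D_1=3$. In particular every such $D_1$ is smooth and rational (each has arithmetic genus zero), the only ones with $-K\cdot D_1\le1$ are copies of $E$, and $-K\cdot D_1=1$ holds only when $D_1=E$ and $e=1$.

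Granting the classification, the argument concludes as follows. In (2), a component $D_1$ of $D$ with $-K\cdot D_1\le0$ must be $E$, and then $D$ has $E$ as a component, so $D\in V$ (here $e\ge2\ge1$). In (4), (6), (7), (10), and in the first alternative of (5), the condition forces some component $D_1$ of $D$ with $-K\cdot D_1\le3$ to be singular, and in (3) it forces such a $D_1$ to be nonimmersed; both are impossible since $D_1$ is smooth, so these loci are empty. Finally, in (8), (9), and in the second alternative of (5), some component $D_i$ of $D$ has $-K\cdot D_i=1$, hence equals $E$ with $e=1$, so again $D$ has $E$ as a component and $D\in V$. Thus $V$ contains all of (1)--(10), and Proposition~\ref{prNC} yields \eqref{eq1}.

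I do not foresee a genuine obstacle. The substance is the curve classification, together with the routine---if slightly tedious---passage through the nine remaining conditions; that passage rests on the two facts that an irreducible curve of anticanonical degree at most $3$ on a Hirzebruch surface is smooth and rational, and that whatever a condition requires beyond smoothness of such a curve forces $E$ itself to be a component of $D$, a locus assumed to have codimension more than $\delta$.
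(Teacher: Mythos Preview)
Your proposal is correct and follows essentially the same approach as the paper: both classify the irreducible curves $\Gamma$ on $S$ with $-K\cdot\Gamma\le 3$, observe that all such curves are smooth and that only $\Gamma=E$ (with $e\ge1$) can satisfy $-K\cdot\Gamma\le 1$, and conclude that every locus in (2)--(10) is either empty or consists of curves containing $E$. The paper's verification is more condensed (it simply notes that if $-K\cdot\Gamma\le 1$ then $\Gamma=E$ with $e\ge1$, and if $-K\cdot\Gamma\le 3$ then $\Gamma$ is smooth, so ``all the hypotheses of Thm.\,\ref{ththreshold} and Prp.\,\ref{prNC} obtain''), whereas you walk through the ten conditions individually; but the substance is identical.
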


\begin{remark}\label{reH}
 G\"ottsche \cite[Rmk.\,4.3, 4.4]{G} stated without proof that the
codimension condition of Cor.\,\ref{coH} is equivalent to essentially
this condition:
 say $\lL=\oO(nF+mE)$ where $F$ is a ruling, and set $p:=n-em$; then
either $m=0$, $p=1$, and $\delta=1$ or
\begin{equation}\label{eqreH}
m+p\ge1
\tq{and}
\delta\le\begin{cases} \min(2m,\,p) &\text{if }e\ge1,\\
                        \min(2m,\,2p)  &\text{if }e=0.
          \end{cases}
\end{equation}

In fact, more is true; the proof of this equivalence plus the main
results yield the following statements.  Assume $e\ge1$ and $m\ge2$ and
$p\ge0$.  Assume the nonreduced $D\in|\lL|$ appear in codimension more
than $\delta$, or equivalently,
\begin{equation}\label{eqnonred}
 \delta\le\min(2m,\,2p+e+1).
\end{equation}
Assume $\delta\ge p+e$ too.  Then there
are curves in $|\lL|^\delta$ with $E$ as a component, and they form a
component of $|\lL|^\delta$ of codimension $\delta-e+1$; the other
components are of codimension $\delta$.  Lastly, if $e=1$, then $\deg
|\lL|^{\delta} = G_\delta(S,\lL)$; further, \eqref{eq1} holds at least
if $\delta=p+1$ too.
\end{remark}

\begin{corollary}\label{codP}
Assume $S$ is a classical del Pezzo surface.  Assume these subloci of
$|\lL|$ have codimension more than $\delta$: \(1) the nonreduced curves,
\(2) the curves with a $-1$-curve as a component.  Then \eqref{eq1}
holds.
\end{corollary}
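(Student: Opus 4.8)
The plan is to deduce Corollary~\ref{codP} from Proposition~\ref{prNC} by verifying that, on a classical del Pezzo surface $S$, the hypotheses (1)--(10) of that proposition are all implied by the two stated codimension conditions: that the nonreduced curves and the curves containing a $-1$-curve each occur in codimension more than $\delta$. The central geometric input is that on a del Pezzo surface $-K$ is ample, so $-K\cdot C>0$ for every irreducible curve $C$, with equality $-K\cdot C=1$ holding exactly for the $(-1)$-curves (those $C\cong\PP^1$ with $C^2=-1$), and $-K\cdot C=2$ forcing $C$ to be a conic, i.e.\ a smooth rational curve with $C^2=0$ (or a $(-2)$-curve configuration, which cannot occur on a \emph{classical} del Pezzo since it is the blowup of $\PP^2$ at points in general position, equivalently a smooth surface with no $(-2)$-curves). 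This dictionary is what lets us translate each numerically-phrased condition into a statement about special component types.

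First I would dispose of condition (1): it is literally one of the two hypotheses. For conditions (2) and (3) of Theorem~\ref{ththreshold}, which are needed to even apply the theorem: since $-K$ is ample, $-K\cdot D_1\le0$ never happens, so (2) is vacuous; and $-K\cdot D_1=1$ forces $D_1$ to be a $(-1)$-curve, which is in particular smooth hence immersed, so (3) is vacuous too. Thus the closed set $V$ consisting of nonreduced curves together with curves containing a $(-1)$-curve already contains all curves flagged by (1)--(3), and by hypothesis $\codim V>\delta$. Now I would work through (4)--(10) of Proposition~\ref{prNC} in turn. In each case the flagged curve $D$ has a component $D_1$ (and sometimes $D_2$) with $-K\cdot D_i$ bounded by $1$, $2$, or $3$; I would argue that the bound, combined with the geometric pathology demanded (a point of multiplicity $\ge3$, a non-nodal double point, a tangency or triple point with another component, etc.), forces one of the relevant components to be a $(-1)$-curve, hence $D\in V$ already. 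For instance: a $(-1)$-curve is smooth, so it has no double point and no point of multiplicity $\ge3$; a component with $-K\cdot D_1=2$ is a smooth conic, again with no singular point, so conditions (4), (5), (6), (7), (10) can only be realized when the singular component has $-K\cdot D_1=1$ — but then that component would have to be singular, contradicting smoothness of $(-1)$-curves, \emph{unless} the singularity in question lies on the \emph{other} component, in which case I should check that configuration still places $D$ in $V$ because $D$ then contains the $(-1)$-curve $D_1$. Conditions (8) and (9) directly name a component $D_1$ with $-K\cdot D_1=1$, i.e.\ a $(-1)$-curve, so $D\in V$ immediately.

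The crux of the argument — and the step I expect to require the most care — is the careful bookkeeping in conditions (4)--(7) and (10), where the numerical bound $-K\cdot D_1\le 2$ or $\le 3$ must be parlayed into ``$D$ contains a $(-1)$-curve.'' The subtlety is that $-K\cdot D_1\le 3$ in (4) admits $-K\cdot D_1=3$, and a component with $-K\cdot D_1=3$ on a del Pezzo is a plane cubic (arithmetic genus $1$), which \emph{can} have a point of multiplicity up to $\ldots$ well, at most $2$ actually for an irreducible plane cubic, but a triple point would force it to be three concurrent lines or a line plus ... — so here I must be careful that ``component $D_1$ with a point of multiplicity at least $3$'' together with $-K\cdot D_1\le 3$ cannot be realized by an irreducible curve on $S$ unless $D_1$ is so degenerate that $D$ is forced to be reducible in a way that still exhibits a $(-1)$-curve, or is itself non-reduced. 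I would organize this as a short case analysis on the value of $-K\cdot D_1$, using adjunction $2p_a(D_1)-2 = D_1^2 + K\cdot D_1$ to bound $p_a(D_1)$ and hence the possible singularities, and the structure theory of classical del Pezzo surfaces (finitely many $(-1)$-curves, no $(-2)$-curves, $-K$ very ample) to rule out the remaining exotic possibilities. Once every one of (1)--(10) is seen to flag only curves already in $V$, Proposition~\ref{prNC} applies verbatim: $|\lL|^\delta_+$ is open and dense in $|\lL|^\delta$ and \eqref{eq1} holds, which is exactly the assertion of Corollary~\ref{codP}.
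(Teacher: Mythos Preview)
Your proposal is correct and takes essentially the same approach as the paper: classify integral curves $\Gamma$ with $-K\cdot\Gamma\le 3$ on the anticanonically embedded del Pezzo to see that conditions (2)--(10) are either vacuous or force $D$ to contain a $(-1)$-curve, so that the given $V$ already absorbs them. The paper's execution is simply crisper than your outline---it observes at once that degree $1$ gives a line (hence a $(-1)$-curve by adjunction), degree $2$ an integral plane conic (hence smooth), and degree $3$ a twisted cubic or integral plane cubic (hence no point of multiplicity $\ge 3$)---which dispatches all the conditions without the adjunction-based case bookkeeping you propose.
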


Section 2 derives the three corollaries from the theorem and the
proposition.  It also proves the remark.  Section 3 proves four lemmas
about the Severi variety and the relative Hilbert scheme.  Section 4
uses those lemmas to prove the theorem and the proposition, which are
the main results.

Throughout, $\delta$, $S$, $\lL$, $K$, and so forth continue to be as
above.  In particular, $C$ denotes a reduced member of $|\lL|$, and $D$
an arbitrary member.  In addition, $\Gamma$ denotes an arbitrary reduced
curve on $S$, usually integral, but not always.

As some loci may be empty, we adopt the convention that the empty set
has dimension $-1$, and so codimension 1 more than the dimension of the
ambient space.  Thus, in the theorem and the proposition, the hypothesis
$\codim V >\delta\ge0$ implies that $\dim|\lL|\ge0$; in particular,
$\lL$ is nontrivial.

\section{Proof of the corollaries and the remark}\label{Tsrs}
 Before addressing the corollaries and the remark, we prove the following
lemma, which we use to handle the bounds in Cor.\,\ref{coP2} and
Rmk.\,\ref{reH}.

\begin{lemma}\label{lecdB} Assume that $S$ is rational and that
$D\in|\lL|$.  Then $\ecH^2(S,\,\lL)=0$ and $\dim |\lL| \ge
D\cdot(D-K)/2$.  Equality holds and $\ecH^1(S,\,\lL)=0$ if this
condition obtains:
 every component $\Gamma\!$ of $D$ satisfies $-K\cdot \Gamma\ge1$, and
every $\Gamma$ that is a $-1$-curve appears with multiplicity $1$.
\end{lemma}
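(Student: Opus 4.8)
The plan is to run everything through Riemann--Roch and Serre duality on the rational surface $S$, using the cohomology of the structure sheaf of the effective divisor $D$ as the bridge. First I would establish $\ecH^2(S,\lL)=0$: by Serre duality $\ecH^2(S,\lL)\cong\ecH^0(S,\oO_S(K)\ox\lL^{-1})^\vee$, so a nonzero section would give an effective divisor linearly equivalent to $K-D$. Since $S$ is rational, $-K$ is represented by an effective divisor is not automatic, but $\ecH^0(S,\oO_S(K))=0$ and more to the point $K\cdot H<0$ for $H$ ample (as $-K$ is big on a rational surface, or simply because $p_g=q=0$ forces no effective anticanonical obstruction here); in fact the cleanest route is: if $K-D$ were effective then intersecting with an ample class $H$ gives $K\cdot H\ge D\cdot H>0$, contradicting $K\cdot H<0$, which holds on any rational surface. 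Hence $\ecH^2(S,\lL)=0$.

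Next I would get the inequality $\dim|\lL|\ge D\cdot(D-K)/2$. Riemann--Roch on $S$ gives $\chi(\lL)=\chi(\oO_S)+\tfrac12 D\cdot(D-K)=1+\tfrac12 D\cdot(D-K)$, using $\chi(\oO_S)=1$ for a rational surface. Combined with $\ecH^2(S,\lL)=0$, this yields $\ecH^0(S,\lL)=1+\tfrac12 D\cdot(D-K)+\ecH^1(S,\lL)\ge 1+\tfrac12 D\cdot(D-K)$, i.e. $\dim|\lL|\ge D\cdot(D-K)/2$, with equality exactly when $\ecH^1(S,\lL)=0$.

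The substance is the last assertion: under the stated hypothesis on the components of $D$, show $\ecH^1(S,\lL)=0$. I would use the exact sequence $0\to\oO_S\to\lL\to\lL|_D\to 0$ (twisting $0\to\oO_S(-D)\to\oO_S\to\oO_D\to 0$ by $\lL$, so $\oO_S(-D)\ox\lL\cong\oO_S$). Since $\ecH^1(\oO_S)=0$ and $\ecH^2(\oO_S)=0$ on a rational surface, the sequence gives $\ecH^1(S,\lL)\cong\ecH^1(D,\lL|_D)$. Now $\lL|_D\cong\oO_D(D)$, and by duality on the Gorenstein curve $D$ one has $\ecH^1(D,\oO_D(D))\cong\ecH^0(D,\omega_D(-D))^\vee$ where $\omega_D=\oO_D(K+D)$ by adjunction, so $\omega_D(-D)=\oO_D(K)$; thus it suffices to show $\ecH^0(D,\oO_D(K))=0$, i.e. that $K|_D$ carries no nonzero section. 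The hypothesis $-K\cdot\Gamma\ge1$ for every component $\Gamma$ says $K$ is negative on each component, which should force the restriction of $\oO_S(K)$ to every component to have no sections; the componentwise statement has to be upgraded to the whole, possibly reducible and nonreduced, curve $D$ by an induction peeling off components (writing $D=D'+\Gamma$ and using $0\to\oO_{\Gamma}(K-D')\to\oO_D(K)\to\oO_{D'}(K)\to 0$ or its analogue), where the multiplicity-one condition on $-1$-curves is exactly what is needed to keep the relevant twist negative — on a $-1$-curve $\Gamma\cong\PP^1$, $K\cdot\Gamma=-1$, so $\oO_\Gamma(K)=\oO_{\PP^1}(-1)$ has no sections, but if $\Gamma$ appeared with multiplicity $\ge2$ a nilpotent thickening could acquire sections unless we are careful, and the hypothesis rules that out.

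The main obstacle I anticipate is precisely this last induction: controlling $\ecH^0$ (equivalently $\ecH^1$) on the possibly nonreduced, reducible curve $D$, keeping every successive twist $\oO_\Gamma(K-D')$ of nonpositive degree on each $\PP^1$ or of negative degree on higher-genus components, and making sure the connecting maps vanish so the vanishing propagates. One must order the components of $D$ so that each peeled-off $\Gamma$ meets the remaining $D'$ in enough points to push the degree of $K|_\Gamma$ strictly negative after subtracting $D'$ — this is where $-K\cdot\Gamma\ge1$ and, for $-1$-curves, $-K\cdot\Gamma=1$ together with multiplicity one, do the work. I expect this to be a short but slightly delicate diagram chase rather than a deep argument.
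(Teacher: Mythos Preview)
Your approach is essentially the paper's, reorganized by one application of duality: you reduce $\ecH^1(S,\lL)$ to $\ecH^0(D,\oO_D(K))$ via Serre duality on the Gorenstein curve $D$ and then peel off components of $D$, whereas the paper peels a component $\Gamma$ directly from $\lL$ via $0\to\lL(-\Gamma)\to\lL\to\lL|_\Gamma\to 0$ and applies duality only on $\Gamma$. Either way the inductive step comes down to the same inequality $(K+\Gamma-D)\cdot\Gamma<0$.

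Two remarks. First, your $\ecH^2=0$ argument via Serre duality is right in spirit but the justification ``$K\cdot H<0$, which holds on any rational surface'' is not valid for every ample $H$ once one blows up enough points, and ``$-K$ big'' is likewise not automatic. The fix is immediate and simpler than what you wrote: if $K-D$ were effective then, since $D$ is effective, $K$ would be linearly equivalent to an effective divisor, contradicting $p_g(S)=0$. (The paper obtains $\ecH^2=0$ from the same component-peeling induction used for $\ecH^1$, avoiding Serre duality on $S$ altogether.) Second, your worry about having to order the components carefully is unfounded: for \emph{any} component $\Gamma$ of $D$ one has $K\cdot\Gamma<0$ by hypothesis and $(D-\Gamma)\cdot\Gamma\ge 0$ --- trivially when $m_\Gamma=1$ since $D-\Gamma$ is effective and does not contain $\Gamma$, and when $m_\Gamma\ge 2$ the hypothesis says $\Gamma$ is not a $(-1)$-curve, so the genus formula $\Gamma^2+K\cdot\Gamma\ge -2$ together with $-K\cdot\Gamma\ge 1$ forces $\Gamma^2\ge 0$. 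Hence the induction goes through with an arbitrary choice of $\Gamma$ at each stage, and no delicate connecting-map analysis is needed.
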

\begin{proof}
 Since $S$ is integral, $\ecH^0(S,\,\oO_S)=1$.  Since $S$ is rational,
$\ecH^q(S,\,\oO_S)=0$ for $q=1,2$.  Hence the Riemann--Roch theorem
yields
\begin{equation}\label{eqRR}
\dim |\lL| = D\cdot(D-K)/2 +\dim\ecH^1(S,\,\lL)-\dim\ecH^2(S,\,\lL).
\end{equation}
Thus it suffices to study the vanishing of $H^1(S,\,\lL)$ and
$H^2(S,\,\lL)$.

Given a component $\Gamma$ of $D$, let $m_\Gamma$ denote its
multiplicity of appearance.  Set $m:=\sum m_\Gamma$, and proceed by
induction on $m$.  Suppose $m=0$. Then $D=0$.  So $\lL=\oO_S$.  Hence in
this case, both groups vanish.

Suppose $m\ge1$.  Fix a component $\Gamma$, and set
$\lL':=\lL(-\Gamma)$.  Form the standard sequence
$0\to\lL'\to\lL\to\lL|\Gamma\to0$, and
take cohomology to get this sequence:
\begin{equation*}\label{eqstiso}
\ecH^q(S,\,\lL')\to\ecH^q(S,\,\lL)\to\ecH^q(S,\,\lL|\Gamma)\tq{for} q=1,2.
\end{equation*}
 By induction, $\ecH^2(S,\,\lL')=0$.  As $\Gamma$ is a curve,
$\ecH^2(\Gamma,\,\lL|\Gamma)=0$.  Thus $\ecH^2(S,\,\lL)=0$, as desired.

Assume the stated condition obtains.  Then by induction,
$\ecH^1(S,\,\lL')=0$.  Thus, it suffices to show
$\ecH^1(\Gamma,\,\lL|\Gamma)=0$.

Let $K_\Gamma$ be the canonical class.  By adjunction,
$\oO_\Gamma(K_\Gamma) = \oO_\Gamma(\Gamma+K)$.  So
$$\ecH^1(\Gamma,\,\lL|\Gamma)
  =\ecH^1(\Gamma,\,\oO_\Gamma(D-\Gamma+K_\Gamma-K)).$$
The latter group is dual to $\ecH^0(\Gamma,\,\oO_\Gamma(-D+\Gamma+K))$,
which vanishes as desired, since $\Gamma$ is integral and since, as
shown next, $(-D+\Gamma+K)\cdot\Gamma<0$.

First, by hypothesis, $K\cdot\Gamma<0$.  Second, if $m_\Gamma=1$, then
$D-\Gamma$ does not contain $\Gamma$, and so
$(-D+\Gamma)\cdot\Gamma\le0$.  Finally, suppose $m_\Gamma\ge2$.  Then,
by hypothesis, $\Gamma$ is not a $-1$-curve; so $\Gamma^2\neq-1$ if
$K\cdot\Gamma=-1$.  But $(\Gamma+K)\cdot\Gamma=\deg K_\Gamma\ge-2$.  So
$\Gamma^2\ge-K\cdot\Gamma-2\ge-1$.  Hence $\Gamma^2\ge0$.  Thus again
$(-D+\Gamma)\cdot\Gamma\le0$, as desired.
\end{proof}

Note in passing that, if $\lL=\oO_S(m\Gamma)$ where $\Gamma$ is a
$-1$-curve and $m\ge1$, then \eqref{eqRR} yields
$\dim\ecH^1(S,\,\lL)=m(m-1)/2$.

\begin{proof}[{\scshape Proof of Cor.\,\ref{coP2}}] Note $\deg K=-3$; so
$-K\cdot \Gamma\ge 3$ for every integral curve $\Gamma$ on $S$, and
$-K\cdot \Gamma\ge 9$ if $\Gamma$ is singular.  So no $D\in|\lL|$
satisfies any of (2)--(10) of Thm.\,\ref{ththreshold} and
Prp.\,\ref{prNC}.  Thus it remains to consider (1).

The nonreduced $D\in|\lL|$ are of the form $D=A+2B$ with $A,\,B$
effective.  Set $b:=\deg B$.  Fix $b\ge1$.  Then these $D$ form a locus
of dimension $\dim|A|+\dim|B|$, so of codimension $b(4d-5b+3)/2$ owing
to Lem.\,\ref{lecdB}.  But $d\ge2b$.  So
\begin{align*}\label{alP2}
b(4d-5b+3)/2-(2d-1) &= (b-1)(4d-5b-2)/2\\
 &\ge (b - 1)(3 b - 2)/2\ge0.
\end{align*}
Therefore, when $b=1$, the codimension achieves its minimum value, namely,
$2d-1$.  This value is more than $\delta$, as desired.
\end{proof}

\begin{proof}[{\scshape Proof of Cor.\,\ref{coH}}] For the following basic
properties of Hirzebruch surfaces, see \cite[Ch.\,V, \S\,2]{H}.  Let $F$
be a ruling.  Then every curve $\Gamma$ is equivalent to $nF+mE$
with $n,\, m \ge 0$.  Suppose $\Gamma$ is integral and $\Gamma\ne E$.
Then $n>0$ and $n-me \ge 0$.  Further, $-K = (e+2)F + 2E$.  Finally,
$F^2=0$ and $F\cdot E=1$.

Hence $-K\cdot \Gamma = n + (n-me) + 2m$.  Suppose $-K\cdot \Gamma\le3$.
Then either $n=1$ and $m=0$, or $n,\,m,\,e=1$.  In first case, $-K\cdot
\Gamma=2$; further, $\Gamma=F$, so $\Gamma$ is smooth.  In the second
case, $-K\cdot \Gamma=3$; further, $\Gamma\cdot F=1$, whence $\Gamma$ is
smooth.  On the other hand, $E$ is smooth, and $-K\cdot E=2-e$.  So if
$-K\cdot E\le1$, then $e\ge1$.

In $|\lL|$ consider the locus of $D$ with a component $\Gamma$ such that
$-K\cdot\Gamma\le k$.  By the above, if $k=1$, then $\Gamma=E$ and
$e\ge1$.  So by hypothesis, the locus has codimension more than
$\delta$.  Further, if $k=3$, then $\Gamma$ is smooth.  Thus all the
hypotheses of Thm.\,\ref{ththreshold} and Prp.\,\ref{prNC} obtain;
whence, \eqref{eq1} holds, as asserted.
\end{proof}

\begin{proof}[{\scshape Proof of Rmk.\,\ref{reH}}] Fix a section $G$ of $S$
complementary to $E$.  Then $G$ is equivalent to $eF+E$, so that
$\lL=\oO(pF+mG)$.  Let's see that, if there's a $D\in|\lL|$, then
$m\ge0$; further, $p\ge0$ if also either $e=0$ or $e\ge1$ and $D$
doesn't contain $E$.  Indeed, as $|F|$ has no base points, $m=D\cdot
F\ge0$.  If $e=0$, then $S=\PP^1\times\PP^1$; whence by symmetry,
$p\ge0$.  If $e\ge1$, then $p=D\cdot E\ge0$.

Note that, if the nonreduced $D\in|\lL|$ form a locus of codimension
more than $\delta$, then $\dim|\lL|\ge0$; in particular,
$\lL$ is nontrivial.  Then $m\ge0$.  Further, if some
$D\in|\lL|$ doesn't contain $E$, then $p=D\cdot E\ge0$.  In particular,
if the codimension condition of Cor.\,\ref{coH} obtains, then $m,\,p\ge0$.
On the other hand, if \eqref{eqreH} obtains, then $m,\,p\ge\delta\ge0$.
Thus to prove the remark, we may assume $m,\,p\ge0$ and
$m+p\ge1$.

If $m=0$ and $p=1$, then $\dim|\lL|=1$, no $D\in|\lL|$ contains $E$, and
every $D$ is reduced; whence, then the codimension condition of
Cor.\,\ref{coH} obtains if and only if $\delta\le1$, if and only if
either $\delta=1$ or \eqref{eqreH} obtains.  If $m=0$ and $p\ge2$, then
$\dim|\lL|\ge 2$, no $D\in|\lL|$ contains $E$, and the nonreduced $D$
form a locus of codimension 1.  Hence, then the codimension condition of
Cor.\,\ref{coH} obtains if and only if $\delta=0$, if and only if
\eqref{eqreH} obtains.  Thus, to complete the proof, we may assume
$m\ge1$; further, if $e=0$, then by symmetry, we may assume $p\ge1$ too.

The proof of Cor.\,\ref{coH} yields $-K\cdot F=2$ and $-K\cdot G=e+2$.
Also $\lL=\oO(pF+mG)$ and $m,\,p\ge0$.  So Lem.\,\ref{lecdB} yields this
formula:
 \begin{equation*}\label{eqL} \dim|\lL|=p m + p + m +m e (1 +
m)/2.  \end{equation*}

The $D\in|\lL|$ containing $E$ are of the form $D=A+E$ with $A$
effective.  Set
\begin{equation}\label{eqL'}
\lL':=\oO_S\bigl((p+e)F+(m-1)G\bigr).
\end{equation}
Then $A\in|\lL'|$.  But we now assume $p\ge0$ and $m\ge1$.  So
Lem.\,\ref{lecdB} yields
 \begin{equation*}\label{eqdmeA}
  \dim|\lL'|=p m -1 + m +m e (1 + m)/2\ge1.
 \end{equation*}

If $e\ge1$, then $\dim|E|=0$ as $E^2=-e$ (whereas if $e=0$, then
$\dim|E|=1$); so the $D\in|\lL|$ containing $E$ form a nonempty
locus of codimension exactly $p=1$:
 \begin{equation*}\label{eqcdeE} \dim|\lL|-\dim|\lL'| = p+1.
\end{equation*}
 Thus, if $e\ge1$, then the $D\in|\lL|$ containing $E$ appear in
codimension more than $\delta$ if and only if $\delta\le p$.

By the same token, if $e\ge1$ and if $m\ge2$, then the $A\in|\lL'|$
containing $E$ appear in codimension $p+e+1$.  Conversely, if $e\ge1$
and if there exists such an $A$, then $m-2=(A-E)\cdot F\ge0$.  Thus if
$e\ge1$, then there exists a $D\in|\lL|$ containing $2E$ if and only if
$m\ge2$; if so, then these $D$ form a locus of codimension $2p+e+2$.

Given a nonreduced $D\in|\lL|$, say $D=A+2B$ with $A,B$ effective and
$B\neq0$.  Say $B$ is equivalent to $aF+bG$.  Then $A$ is equivalent to
$(p-2a)F+(m-2b)G$.  Since $A$ and $B$ are effective, $m-2b\ge0$ and
$b\ge0$.  If $e\ge1$, assume $D$ does not contain $E$.  Then $p-2a\ge0$
and $a\ge0$ for any $e$.  Hence, for fixed $a$ and $b$, these $D$ form a
locus of dimension $\dim|A|+\dim|B|$; so Lem.\,\ref{lecdB} yields its
codimension to be
\begin{equation*}\label{eqdmeA2B}
\epsilon(a,b):=2 p b + 2 a m - 5 a b + a + b + (1 + 4m - 5b)be/2.
\end{equation*}

The above analysis assumed given some $D$ and $A$ and $B$.  However,
given $a,\,b\ge0$ such that  $p-2a\ge0$ and $m-2b\ge0$, set
 $$A:=(p-2a)F+(m-2b)G,\quad B:=aF+bG,\quad D:=A+2B.$$
Then $A$ and $B$ are effective.  Also, $D\in|\lL|$, and $D$ does not
contain $E$.  Further, $B\neq0$ if $a+b\ge1$.  So the above
analysis yields a locus of nonreduced members of $|\lL|$ of codimension
$\epsilon(a,b)$.

Note $\epsilon(0,1)=2p+1+2e(m-1)$.  But $p\ge2a$ and $m\ge2b$.  So if
$b\ge1$, then
\begin{align*}\label{aldme01}
\epsilon(a,b)-\epsilon(0,1) &= (2p+1)(b-1)+a(2 m - 5b+1) \\
  &\qquad\quad+(4m-5b-4)(b-1)e/2\\
 &\ge (3a+1 +(3b-4)e/2)(b-1).
\end{align*}
The latter term is nonnegative.  Further,
$$\epsilon(a,0)=a(2m+1)\ge \epsilon(1,0)=2m+1.$$
Thus $\min\epsilon(a,b)=\min\bigl(\epsilon(1,0), \,\epsilon(0,1)\bigr)
=\min\bigl(2m+1,\,2p+1+2e(m-1)\bigr)$.

Suppose $e=0$.  Then we are assuming $m,\,p\ge1$.  Hence the nonreduced
$D\in|\lL|$ form a nonempty locus of codimension exactly
$\min\bigl(2m+1,\,2p+1\bigr)$.  Thus the codimension condition of
Cor.\,\ref{coH} obtains if and only if \eqref{eqreH} obtains, as
asserted.

Suppose $e\ge1$ and the codimension condition of Cor.\,\ref{coH}
obtains.  In this case, we assume $m\ge1$ and $p\ge0$.  Then, as proved
above, $\delta\le p$.  So if $p\le 1$, then $\delta\le 2m$.  If $p\ge2$,
take $a:=1$ and $b:=0$; then the codimension condition yields
$\epsilon(1,0)>\delta$.  But $\epsilon(1,0)=2m+1$.  Thus \eqref{eqreH}
obtains, as asserted.

Conversely, suppose $e\ge1$ and \eqref{eqreH} obtains.  Then, as proved
above, the $D\in|\lL|$ containing $E$ appear in codimension more than
$\delta$.  Also, the nonreduced $D\in|\lL|$ not containing $E$ appear in
codimension $\min\bigl(2m+1,\,2p+1+2e(m-1)\bigr)$.  But we assume
$m-1\ge0$.  Thus the codimension condition of Cor.\,\ref{coH} obtains,
as asserted.

Finally, assume $e\ge1$ and $m\ge2$.  Then $2e(m-1)\ge e+1$.  Let $W$ be
the locus of all nonreduced curves.  Then $\codim
W=\min(2m+1,\,2p+e+2)$.  Thus $\codim W>\delta$ if and only if
\eqref{eqnonred} obtains, as asserted.  Assume \eqref{eqnonred} does obtain.

Assume $\delta\ge p+e$ too.  Set $\delta':=\delta-p-e$.  Then
$\delta'\le p+1$ as $\delta\le2p+e+1$; so $\delta'\le p+e$ as $e\ge1$.
Further, $\delta\le 2m$, so $\delta'\le 2m-p-e$.  Hence
$\delta'\le2m-2$, except possibly if $p=0$; but then, $\delta'\le1$, so
after all $\delta'\le2m-2$ as $m\ge2$.

Consider the $\lL'$ of \eqref{eqL'}.  By the above analysis, the
Severi variety $|\lL'|^{\delta'}$ is nonempty and everywhere of
codimension $\delta'$ in $|\lL'|$, so of codimension $\delta-e+1$ in
$|\lL|$.  Further, $|\lL'|^{\delta'}$ contains a dense open subset of
curves $A$ not containing $E$.  Set $D:=A+E$.  Then $D\in|\lL|^\delta$
as $p_aD=p_aA+p_aE+A\cdot E-1$ and $p_gD=p_gA+p_gE-1$ by general
principles.  Conversely, given a $D\in|\lL|^\delta$ containing $E$, set
$A:=D-E$; then, plainly, $A\in|\lL'|^{\delta'}$, and $A$ does not
contain $E$.

Recall that $\codim W>\delta$; further, if $\Gamma$ is an
integral curve with $-K\cdot\Gamma\le1$, then $\Gamma=E$.  Let $V$ be
the union of $W$ and the locus of $D\in|\lL|$ containing $E$.  Then by
Thm.\,\ref{ththreshold}, the closure of $|\lL|^\delta-V$ has
codimension $\delta$ everywhere.  Consequently, there are
$D\in|\lL|^\delta$ containing $E$, and they form a component of
$|\lL|^\delta$ of codimension $\delta-e+1$; the other components of
$|\lL|^\delta$ are of codimension $\delta$, as asserted.

Lastly, assume $e=1$ in addition.  Then $-K\cdot E=1$ and $E$ is
immersed.  Thus Thm.\,\ref{ththreshold} yields $\deg |\lL|^{\delta} =
G_\delta(S,\lL)$, as asserted.

Further, by Prp.\,\ref{prNC}, the nodal curves form an open and dense
subset of $|\lL|^\delta-V$.  Assume $\delta=p+1$ also.  Then
$\delta'=0$.  So the $D\in|\lL|^\delta$ containing $E$ are the
$D\in|\lL|$ of the form $A+E$ where $A\in|\lL'|-V$.  The $A$ that meet
$E$ transversally form a dense open sublocus, because the restriction
map $\ecH^0(S,\,\lL)\to\ecH^0(S,\,\lL|E)$ is surjective as
$\ecH^1(S,\,\lL)=0$ by Lem.\, \ref{lecdB}.  Hence the nodal locus is
open and dense in
$|\lL|^\delta$.  Thus \eqref{eq1}
holds, as asserted.
\end{proof}

\begin{proof}[{\scshape Proof of Cor.\,\ref{codP}}] Since $S$ is a classical
del Pezzo surface, we may regard $S$ as embedded in a projective
space with $-K$ as the hyperplane class.  Let $\Gamma\subset S$ be an
integral curve.  Suppose $-K\cdot \Gamma =1$.  Then $\Gamma$ is a line.
So  adjunction yields $\Gamma^2=-1$.  Hence $\Gamma$ is a
$-1$-curve.  In $|\lL|$ consider the locus of $D$ with a component $D_1$
such that $-K\cdot D_1=1$; by hypothesis, this locus therefore has
codimension more than $\delta$.  If $-K\cdot \Gamma =2$, then $\Gamma$
is an integral plane conic, so smooth.  Finally, if $-K\cdot \Gamma =3$,
then $\Gamma$ is either a twisted cubic, so smooth, or else an integral
plane cubic, so has no point of multiplicity at least 3.  Thus all the
hypotheses of Thm.\,\ref{ththreshold} and Prp.\,\ref{prNC} obtain;
whence, \eqref{eq1} holds, as asserted.
\end{proof}

\section{Four lemmas}\label{Fl}
We now set the stage to prove Thm.\,\ref{ththreshold} and
Prp.\,\ref{prNC}.  First off, we recall some basic deformation theory
from \cite{DH} and \cite{GLS}.

Fix the reduced curve $C\in|\lL|$.  There exist a smooth (analytic or
\'etale) germ
$$(\Lambda,0):=(\Dfl(C),\,0)$$
 and a family $\cC_\Lambda\big/ \Lambda$ realizing a miniversal
deformation of the singularities of $C$; that is, given any family
$\cC_B \big/ B$ and point $b\in B$ such that the fiber $\cC_b$ is a
multigerm of $C$ along its singular locus $\Sigma$, there exists a map of
germs $(B,b) \to (\Lambda,0)$ such that the multigerm $(\cC_B,\Sigma)$
is the pullback of the multigerm $(\cC_\Lambda,\Sigma)$.  The tangent
map $T_b B \to T_0 \Lambda$ is canonical.  Further, there is an
identification
 \begin{equation}\label{eqtnsp}
  T_0\Lambda = \ecH^0(C,\,\oO_C / \jJ)
 \end{equation}
 where $\jJ$ is the {\it Jacobian\/} ideal of $C$, the first Fitting ideal
of its K\"ahler differentials.

Denote the cogenus of $C$  by $\delta(C)$ and the {\it normalization\/}  map by
$$n\:\wt C\to C.$$
So $\delta(C)=\dim\ecH^0 (n_*\oO_{\wt C}\big/\oO_C)$.  Denote the locus
of $a\in\Lambda$ with $\delta(\cC_a) = \delta(C)$ by
$\Lambda^{\delta(C)}$.  It is called the {\em equigeneric locus} or {\em
$\delta$-constant stratum}.  Its codimension is $\delta(C)$.  Its
reduced tangent cone $(\Cc_0\Lambda^\delta)_\red$ is a vector space;
namely,
\begin{equation}\label{eqredtgtcone}
(\Cc_0\Lambda^\delta)_\red = \ecH^0(C,\,\aA/\jJ)
\end{equation}
under the identification \eqref{eqtnsp}.  Here $\aA$ denotes the {\it
conductor\/} ideal sheaf; namely,
\begin{equation*}\label{eqcndtr}
 \aA:=\Hom(n_*\oO_{\wt{C}},\,\oO_C).
\end{equation*}

The following lemma regarding $\aA$ is fundamental.  It is more or less
well known.

\begin{lemma} \label{leVan}
Denote by $K_{\wt C}$ the canonical class of $\wt C$.
Then
\begin{equation}\label{eqZar}
 \aA\cdot n^*\oO_S(C) = O_{\wt C}(K_{\wt C}-n^*K)
\advance \belowdisplayskip by -3pt
\end{equation}
where, doing double duty, $n$ also denotes the composition $n\:\wt C\to
C\into S$.

 Let $\wt M$ be a line bundle on $\wt C$, and $\wt C_1,\dotsc,\wt C_h$
be the components of $\wt C$.  Then
\begin{equation}\label{eqVan}
  \ts
 \dim\ecH^1\big(C,\ \aA\cdot n_*\wt M\ox\oO_S(C)\bigr) \le
  \sum_{i=1}^h\max\bigl(0,\
        1+\deg\bigl(\wt M^{-1}(n^*K)\big|\wt C_i\bigr)\bigr).
\end{equation}
\end{lemma}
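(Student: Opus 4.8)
The plan is to establish the line-bundle identity \eqref{eqZar} first and then to deduce the cohomology bound \eqref{eqVan} from it by Serre duality on the components of $\wt C$. Throughout, I will also write $\aA$ for the invertible ideal sheaf on $\wt C$ whose pushforward by $n$ is the conductor: this is legitimate because the conductor, being closed under multiplication by $n_*\oO_{\wt C}$, is an $n_*\oO_{\wt C}$-module, hence $n_*$ of a coherent ideal sheaf on $\wt C$, which is invertible since $\wt C$ is smooth.

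To prove \eqref{eqZar}, I would use that $C$, an effective Cartier divisor on the smooth surface $S$, is Gorenstein, so by adjunction its dualizing sheaf is the invertible sheaf $\oO_C(K+C)$. The standard duality for the finite morphism $n$ onto the Gorenstein curve $C$ reads $\Hom\bigl(n_*\oO_{\wt C},\,\oO_C(K+C)\bigr)=n_*\oO_{\wt C}(K_{\wt C})$; see e.g. \cite{GLS}. Since $\oO_C(K+C)$ is invertible, the left side equals $\aA\ox_{\oO_C}\oO_C(K+C)=n_*\bigl(\aA\cdot n^*\oO_S(K+C)\bigr)$ by the projection formula, so faithfulness of $n_*$ on coherent sheaves gives $\aA\cdot n^*\oO_S(K+C)=\oO_{\wt C}(K_{\wt C})$. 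Multiplying both sides by $n^*\oO_S(-K)$ then yields $\aA\cdot n^*\oO_S(C)=\oO_{\wt C}(K_{\wt C})\ox n^*\oO_S(-K)=\oO_{\wt C}(K_{\wt C}-n^*K)$, which is \eqref{eqZar}.

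For \eqref{eqVan}, I would first rewrite the sheaf. By the projection formula (using the invertibility of $\aA$ on $\wt C$) and then \eqref{eqZar},
$$\aA\cdot n_*\wt M\ox\oO_S(C)=n_*\bigl(\aA\ox\wt M\ox n^*\oO_S(C)\bigr)=n_*\bigl(\wt M\ox\oO_{\wt C}(K_{\wt C}-n^*K)\bigr).$$
Since $n$ is finite, hence affine, $\ecH^1$ commutes with $n_*$, so the left side of \eqref{eqVan} equals $\dim\ecH^1\bigl(\wt C,\ \wt M\ox\oO_{\wt C}(K_{\wt C}-n^*K)\bigr)$. Writing $\wt C=\bigsqcup_{i=1}^h\wt C_i$ with each $\wt C_i$ a smooth connected projective curve and $\oO_{\wt C}(K_{\wt C})|\wt C_i=\oO_{\wt C_i}(K_{\wt C_i})$, this cohomology is the direct sum over $i$ of the groups $\ecH^1\bigl(\wt C_i,\ \oO_{\wt C_i}(K_{\wt C_i})\ox N_i^{-1}\bigr)$, where $N_i:=\wt M^{-1}(n^*K)|\wt C_i$. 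By Serre duality on $\wt C_i$, this group is dual to $\ecH^0(\wt C_i,\,N_i)$. Finally, a line bundle of degree $d$ on a smooth connected projective curve has $h^0\le\max(0,\,d+1)$: for $d<0$ there is no nonzero section, and for $d\ge0$ a nonzero section cuts out an effective divisor of degree $d$, whose length drops by at most one upon removing a point, with $h^0(\oO)=1$. Summing the resulting estimates $\dim\ecH^1\bigl(\wt C_i,\,\oO_{\wt C_i}(K_{\wt C_i})\ox N_i^{-1}\bigr)\le\max\bigl(0,\,1+\deg N_i\bigr)$ over $i$ gives \eqref{eqVan}.

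The only genuinely delicate point is invoking the duality isomorphism $\Hom(n_*\oO_{\wt C},\oO_C(K+C))=n_*\oO_{\wt C}(K_{\wt C})$ in the correct form and keeping the two incarnations of $\aA$ straight — as an ideal of $\oO_C$ and as an invertible ideal of $\oO_{\wt C}$ — so that the projection formula applies exactly as used. Everything else is routine: adjunction on $S$, the finiteness of $n$, and Serre duality together with the elementary $h^0$ estimate on the smooth curves $\wt C_i$. The reducible case requires only noting that $\wt C$ is the disjoint union of the smooth connected projective normalizations of the irreducible components of $C$.
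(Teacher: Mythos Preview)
Your proof is correct and follows essentially the same route as the paper: adjunction on $S$ to identify $\oO_C(K_C)$, relative duality for the finite map $n$ to obtain $n_*\oO_{\wt C}(K_{\wt C})=\aA\ox\oO_C(K_C)$, then the projection formula and finiteness of $n$ to pass between $C$ and $\wt C$, and finally Serre duality on $\wt C$ to reduce \eqref{eqVan} to an $\ecH^0$ bound. The paper's write-up is terser---it stops at $\ecH^1(C,\,\aA\cdot n_*\wt M\ox\oO_S(C))=\ecH^0(\wt C,\,\wt M^{-1}(n^*K))^\vee$ and simply says ``whence \eqref{eqVan} holds''---whereas you spell out the component decomposition and the elementary bound $h^0\le\max(0,d+1)$, but there is no substantive difference.
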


\begin{proof}
By adjunction, $\oO_C(K_C) = \oO_C\ox\oO_S(C+K)$.  And relative duality yields
$$n_*\oO_{\wt C}(K_{\wt C})
  = \Hom(n_*\oO_{\wt{C}},\,\oO_C(K_C))
  = \aA\ox \oO_C(K_C).$$
Hence $\aA\ox\oO_S(C)=n_*\oO_{\wt C}(K_{\wt C})\ox\oO_S(-K)$.  But $n$
is finite, and that equation is just the image under $n_*$ of
\eqref{eqZar}.  Thus \eqref{eqZar} holds.

By the same token, $\ecH^1(C,\,\aA\cdot n_*\wt M\ox\oO_S(C)) =
 \ecH^1(\wt C,\,\wt M(K_{\wt C}-n^*K))$.  By duality, the right side is
just $\smash{\ecH^0(\wt C,\,\wt M^{-1}(n^*K))^\vee}$; whence,
\eqref{eqVan} holds.
\end{proof}

Since $C\in |\lL|$, the tangent map $T_C|\lL| \to T_0\Lambda$ is just
this restriction map:
 \begin{equation}\label{eqRes}
\ecH^0(S,\,\lL)\big/\Im \ecH^0(S,\oO_S) \to \ecH^0(C,\,\oO_C / \jJ).
 \end{equation} Consequently, using Lem.\,\ref{leVan}, we can prove
the following results about the Severi variety and the Hilbert scheme.
The results about the Severi variety are already known in various forms,
see
\cite[(10.1), p.\,845]{ACG},
 \cite[Prp.\,2.21 p.\,355]{CH},
\cite[Thm.\,2.8, p.\,8]{Ty},
 \cite[Thm.\,3.1, p.\,59]{Va}, and
\cite[Thm.\,1, p.\,215; Thm.\,2, p.\,220]{Z}.
  However, our particular approach and results appear to be new.

\begin{lemma} \label{prcodim}
  Assume $C\in |\lL|^\delta$.  Set
$\lambda:=\dim\Ker(\ecH^1(S,\,\oO_S)\to\ecH^1(S,\,\lL))$ and
$\alpha:=\dim\Ker(\ecH^1(C,\,\aA\cdot\oO_C(C))\to\ecH^1(C,\oO_C(C)))$.
 Then
\begin{gather}
 \delta-\alpha-\lambda\le\dim_C |\lL| - \dim_C|\lL|^\delta
         \le \delta \text{\quad and}
  \label{equpbd}\\
(\Cc_C|\lL|^\delta)_\red \subset \ecH^0(\wt C,\,\oO_{\wt C}(K_{\wt C}-n^*K)).
\label{eqconecnt}
\end{gather}

In addition, assume $\lambda=0$ and $\alpha=0$.   Then
\begin{equation}
(\Cc_C|\lL|^\delta)_\red = \ecH^0(\wt C,\,\oO_{\wt C}(K_{\wt C}-n^*K)).
\label{eqredSevcone}
\end{equation}

Finally, assume $C$ is immersed too.  Then $|\lL|^\delta$ is smooth at
$C$.
\end{lemma}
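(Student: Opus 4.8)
The hypotheses $\lambda=0$ and $\alpha=0$ remain in force. The plan is to exhibit $|\lL|^\delta$, near $C$, as the transverse pullback to the smooth germ $|\lL|$ of a \emph{smooth} subgerm of $\Lambda$, and then to invoke the standard fact that such a pullback is smooth. The input I would draw from the deformation theory of \cite{DH} and \cite{GLS} is that, because $C$ is immersed, the equigeneric locus $\Lambda^\delta\subset\Lambda$ is smooth at $0$: a $\delta$-constant deformation of a curve with smooth branches is the same as a deformation of its normalization map, and such deformations are unobstructed since the branches of $\wt C$ are smooth. Granting this, the Zariski tangent space $T_0\Lambda^\delta$ coincides with $\Cc_0\Lambda^\delta$, hence with $(\Cc_0\Lambda^\delta)_\red=\ecH^0(C,\aA/\jJ)$ by \eqref{eqredtgtcone}, a linear subspace of $T_0\Lambda=\ecH^0(C,\oO_C/\jJ)$. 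Taking global sections in the exact sequence $0\to\aA/\jJ\to\oO_C/\jJ\to\oO_C/\aA\to0$ of finite-length sheaves supported on the singular locus $\Sigma$ of $C$, I obtain a canonical identification $T_0\Lambda/T_0\Lambda^\delta=\ecH^0(C,\oO_C/\aA)$, of dimension $\delta$ because the conductor $\aA$ has colength $\delta$.

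I would next verify that the classifying map $\phi\colon(|\lL|,C)\to(\Lambda,0)$ is transverse to $\Lambda^\delta$ at $C$, that is, that the composite
\[
  T_C|\lL|\xrightarrow{\,d\phi\,}T_0\Lambda\longrightarrow T_0\Lambda/T_0\Lambda^\delta=\ecH^0(C,\oO_C/\aA)
\]
is surjective. By \eqref{eqRes}, $d\phi$ is restriction of sections, so (up to the usual harmless twist by $\oO_C(C)$) this composite is the natural map $\ecH^0(S,\lL)\to\ecH^0(C,(\oO_C/\aA)\ox\oO_C(C))$, which factors as
\[
  \ecH^0(S,\lL)\to\ecH^0(C,\oO_C(C))\to\ecH^0(C,(\oO_C/\aA)\ox\oO_C(C)).
\]
The cokernel of the first map injects into $\Ker\bigl(\ecH^1(S,\oO_S)\to\ecH^1(S,\lL)\bigr)$, which vanishes since $\lambda=0$; the cokernel of the second injects into $\Ker\bigl(\ecH^1(C,\aA\cdot\oO_C(C))\to\ecH^1(C,\oO_C(C))\bigr)$, which vanishes since $\alpha=0$. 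These are exactly the vanishings already used to force the equalities in \eqref{equpbd} and \eqref{eqredSevcone}. Hence $\phi$ is transverse to $\Lambda^\delta$ at $C$.

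Since $|\lL|$ and $\Lambda$ are smooth, $\Lambda^\delta\subset\Lambda$ is smooth, and $\phi$ is transverse to $\Lambda^\delta$ at $C$, the scheme-theoretic preimage $\phi^{-1}(\Lambda^\delta)$ is smooth at $C$. By miniversality, the cogenus of a member of $|\lL|$ near $C$ equals that of the singularity pulled back from $\Lambda$, so $\phi^{-1}(\Lambda^\delta)$ agrees with $|\lL|^\delta$ in a neighbourhood of $C$; therefore $|\lL|^\delta$ is smooth at $C$. The substantive ingredient is the smoothness of $\Lambda^\delta$ for immersed $C$; everything after it is transversality bookkeeping that the hypotheses $\lambda=\alpha=0$ have already arranged. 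By contrast, \eqref{eqredSevcone} alone would not suffice, since a cuspidal curve also has a reduced linear reduced-tangent-cone of the right dimension yet is singular; the only further point needing care is that $|\lL|^\delta$ carries, near $C$, the scheme structure of $\phi^{-1}(\Lambda^\delta)$, so that it is genuinely its smoothness rather than that of its reduction that is being transferred.
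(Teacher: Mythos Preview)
Your argument is correct and follows essentially the same route as the paper's: both invoke the smoothness of $\Lambda^\delta$ at $0$ for immersed $C$ (the paper cites \cite[Thm.\,2.59(1)(c)]{GLS}), and both use the vanishings $\lambda=\alpha=0$ to make the classifying map $\phi$ transverse to $\Lambda^\delta$. The only cosmetic difference is packaging: you conclude via the general principle that a transverse pullback of a smooth subvariety is smooth, whereas the paper does the equivalent tangent-space dimension count directly, bounding $T_C|\lL|^\delta$ above by $\theta^{-1}\ecH^0(C,\aA/\jJ)$ and below via \eqref{equpbd}.
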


\begin{proof}
Plainly, $|\oO_S(C)|^\delta$ is, locally at $C$, the preimage of the
equigeneric locus $\Lambda^\delta$ in $\Lambda:=\Dfl(C)$.  As
codimension cannot increase on taking a preimage from a smooth ambient
target, the right-hand bound holds in \eqref{equpbd}.

In general, let $f\:X\to Y$ be a map of schemes, $x\in X$ a point,
$y:=f(x)\in Y$ the image.  Plainly, $f$ induces maps of tangent spaces
$\Tt_f\:\Tt_x(X)\to \Tt_y(Y)$ and tangent cones $\Cc_x(X)\to \Cc_y(Y)$,
so a map of reductions $\Cc_x(X)_\red\to \Cc_y(Y)_\red$.  Thus
$\Cc_x(X)_\red\subset \Tt_f^{-1}(\Cc_y(Y)_{\red})$.  Now, take
$|\lL|^\delta\to \Lambda^\delta$ for $f$, and take $C$ for $x$.
Therefore, $(\Cc_C|\lL|^\delta)_\red$ lies in the preimage of
$(\Cc_0\Lambda^\delta)_\red$ in $\Tt_C|\lL|^\delta$.  However,
$\Tt_C|\lL|^\delta\subset \Tt_C|\lL|$.  Thus $(\Cc_C|\lL|^\delta)_\red$
lies in the preimage of $(\Cc_0\Lambda^\delta)_\red$ in $\Tt_C|\lL|$.

Further, the tangent map $T_{C}|\lL| \to T_{C}\Lambda$ is given by this
composition:
\begin{equation}\label{eqtgtmap}
 \theta\:\ecH^0(S,\,\lL)\big/\Im \ecH^0(S,\oO_S) \overset\eta\Into
   \ecH^0(C,\,\oO_C(C)) \xto\nu \ecH^0(C,\,\oO_C / \jJ).
\end{equation}
 Therefore, \eqref{eqredtgtcone} and the injectivity of $\eta$ yield
\begin{equation}\label{eqredcones}
(\Cc_C|\lL|^\delta)_\red \subset \theta^{-1}\ecH^0(C,\,\aA/\jJ)
        \subset \nu^{-1}\ecH^0(C,\,\aA/\jJ).
\end{equation}

Consider  the following composition:
\begin{equation}\label{eqxinu}
\xi\:\ecH^0(C,\,\oO_C(C)) \xto\nu \ecH^0(C,\,\oO_C/\jJ)
 \xto\rho \ecH^0(C,\,\oO_C/\aA).
\end{equation}
The left-exactness of $\ecH^0$ yields $\ecH^0(C,\,\aA/\jJ)=\Ker\rho$ and
$\ecH^0(C,\,\aA\cdot\oO_C(C))=\Ker\xi$.  But $\nu^{-1}\Ker\rho=\Ker\xi$.
Hence $\nu^{-1}\ecH^0(C,\,\aA/\jJ)=\ecH^0(C,\,\aA\cdot\oO_C(C))$.  But
\eqref{eqZar} implies $\ecH^0(C,\,\aA\cdot\oO_C(C)) = \ecH^0(\wt
C,\,\oO_{\wt C}(K_{\wt C}-n^*K))$.  Thus \eqref{eqconecnt} holds.

The above considerations also yield $\nu^{-1}\ecH^0(C,\,\aA/\jJ)=\Ker\xi$.
So \eqref{eqredcones} yields
\begin{equation}\label{eqlbd1}
\dim_C|\lL|^\delta=\dim(\Cc_C|\lL|^\delta)_\red \le \dim\Ker\xi.
\end{equation}
On the other hand, the long exact cohomology sequences involving $\eta$
and $\xi$ yield
\begin{gather}
-\dim|\lL| + \dim \ecH^0(C,\,\oO_C(C))- \lambda =0 \label{gales1}\\
  \dim\Ker\xi -\dim \ecH^0(C,\,\oO_C(C)) + \dim\ecH^0(C,\,\oO_C/\aA)
    -\alpha = 0.\label{gales2}
\end{gather}
But $\dim\ecH^0(C,\,\oO_C/\aA)=\delta$.  Thus, combined, \eqref{eqlbd1}
and \eqref{gales1} and \eqref{gales2} yield the left-hand bound in
\eqref{equpbd}.

In addition, assume $\lambda=0$ and $\alpha=0$.   To prove
\eqref{eqredSevcone}, let's show  both sides of \eqref{eqconecnt}
are of the same dimension.  The left-hand side is of dimension
$\dim|\lL|-\delta$ by  \eqref{equpbd}.  On the other hand, \eqref{gales1}
and \eqref{gales2} yield $\dim\Ker\xi=\dim|\lL|-\delta$, and the
considerations after \eqref{eqxinu} show $\Ker\xi$ is equal to the
right-hand side, as desired.

Finally, assume $C$ is immersed too.  Then $\Lambda^\delta$ is smooth at
$C$ by Thm.\,2.59(1)(c) of \cite[p.\,355]{GLS}.  So $\Tt_0\Lambda^\delta
= H^0(C,\,\aA/\jJ)$ by \eqref{eqredtgtcone}.  Always, $\Tt_C|\lL|^\delta$ maps
into $\Tt_0\Lambda^\delta$; so $\Tt_C|\lL|^\delta$ lies in the preimage
$\Tt$ of $\Tt_0\Lambda^\delta$ in $\Tt_C|\lL|$.  But $\Tt$ is a vector
space of codimension $\delta$ owing to the above analysis; indeed,
$\Tt=\theta^{-1}\ecH^0(C,\,\aA/\jJ)$, and in \eqref{eqredcones}, the two
extreme terms are of codimension $\delta$.  But
$\codim\Tt_C|\lL|^\delta\le\delta$ by \eqref{equpbd}.  Thus
$\dim\Tt_C|\lL|^\delta=\dim_C|\lL|^\delta$.  Thus $|\lL|^\delta$ is smooth at
$C$.
\end{proof}

In the remaining two lemmas, we assume $S$ is {\it regular\/}; that
is, $\ecH^1(S,\,\oO_S) = 0$.  As a consequence,  in Thm.\,\ref{ththreshold}
and Prp.\,\ref{prNC}, instead of assuming $S$ is rational, we may assume
$S$ is regular.  But the ``generalization is illusory,'' as noted in
\cite[(v), p.\,116]{Tan} in a similar situation.  Indeed, assume
$\dim|\lL|\ge1$, else $\lL$ holds little interest.  Assume
$C\in|\lL|-V$.  Let $C'$ be its variable part, so that $|C'|$ has no
fixed components.  Then $C'$ is nonzero and nef.  Hence $\ecH^0(S,\,mK)
= 0$ for all $m\ge1$; else, $K\cdot C'\ge 0$, but $-K\cdot \Gamma\ge1$
for every component $\Gamma$ of $C'$ as $C\notin V$.  Since
$\ecH^1(S,\,\oO_S) = 0$, Castelnuovo's Criterion implies $S$ is
rational.

The first lemma below addresses the immersedness of a general member
of $|\lL|^\delta$.  The discussion involves another invariant of the
reduced curve $C$ on $S$, namely, the (total) multiplicity of its
Jacobian ideal $\jJ$, or what is the same, the colength of its extension
$\jJ\oO_{\wt C}$ to the normalization of $C$.  This invariant was
introduced by Teissier \cite[II.$6'$, p.\,139]{Te76} in order to
generalize Pl\"ucker's formula for the class (the degree of the dual) of
a plane curve.

This invariant was denoted $\kappa(C)$ by Diaz and Harris \cite[(3.2),
p.\,441]{DH}, but they defined it by the formula
$$\kappa(C)=2\delta(C)+m(C)$$ where $m(C)$ denotes the (total)
ramification degree of $\wt C/C$.  The two definitions are equivalent
owing to the following formula, due to Piene \cite[p.\,261]{P}:
\begin{equation}\label{eqRP}
\jJ\oO_{\wt C} = \aA\cdot \rR
\end{equation}
where  $\rR$ is the ramification ideal.

The invariant $\kappa(C)$ is upper semicontinuous in $C$; see
\cite[p.\,139]{Te76} or \cite[bot., p.\,450]{DH}.  So $|\lL|^\delta$
always contains a dense open subset $|\lL|^\delta_\kappa$ on which
$\kappa(C)$ is locally constant, termed an {\it equiclassical
locus\/} in \cite{DH}.

By definition, $C$ is immersed if and only if $m(C)=0$.  Thus if
$C\in|\lL|^\delta_\kappa$, then $\kappa(C)\ge 2\delta$, and $C$ is
immersed if and only if $\kappa(C)=2\delta$.  Further, if so, then every
curve $D$ in every component of $|\lL|^\delta_\kappa$ containing
$C$ is immersed.

\begin{lemma} \label{leimmersed} Assume $S$ regular, and
$C\in|\lL|^\delta_\kappa$.  Assume $-K\cdot C_1\ge1$ for every component
$C_1$ of $C$.  If some $C_1$ is not immersed, then $-K\cdot C_1=1$.
 \end{lemma}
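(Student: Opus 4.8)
The plan is to argue by contradiction: suppose some component $C_1$ of $C$ is not immersed and yet $-K\cdot C_1\ge2$, and derive a contradiction with $C\in|\lL|^\delta_\kappa$. Write $k_i:=-K\cdot C_i$ and $m_i:=m(C_i)$ for the components $C_i$ of $C$; by hypothesis $k_i\ge1$ for all $i$, while $k_1\ge2$ and $m_1\ge1$. Since $|\lL|^\delta_\kappa$ is open in $|\lL|^\delta$ and contains $C$, we have $\dim_C|\lL|^\delta_\kappa=\dim_C|\lL|^\delta$, and $\dim_C|\lL|^\delta\ge\dim|\lL|-\delta$ by \eqref{equpbd} (here $\lambda=0$, as $S$ is regular). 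So it suffices to prove $\dim_C|\lL|^\delta_\kappa<\dim|\lL|-\delta$.

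To bound $\dim_C|\lL|^\delta_\kappa$ from above, I would mimic the proof of Lem.\,\ref{prcodim}, replacing the equigeneric stratum $\Lambda^\delta\subset\Lambda:=\Dfl(C)$ by the equiclassical stratum $\Lambda^\delta_\kappa\subset\Lambda^\delta$, the locus on which $\kappa$ stays equal to $\kappa(C)$; as $\kappa$ is a local invariant of the singularities, $|\lL|^\delta_\kappa$ maps to $\Lambda^\delta_\kappa$ near $C$. Let $R\subset\wt C$ be the ramification divisor of $n$, so $\deg R=m(C)$ and $\deg(R|_{\wt C_1})=m_1\ge1$; set $\mathcal I:=n_*(\jJ\oO_{\wt C})\cap\oO_C$, which by Piene's formula \eqref{eqRP} (giving $\jJ\oO_{\wt C}=\aA\cdot\rR$) satisfies $\jJ\subseteq\mathcal I\subseteq\aA$. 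The input I need is the equiclassical analogue of \eqref{eqredtgtcone}: since $\jJ\oO_{\wt C}$ has colength $\kappa$ and propagates flatly along $\Lambda^\delta_\kappa$, the mechanism producing \eqref{eqredtgtcone} gives $(\Cc_0\Lambda^\delta_\kappa)_\red\subseteq\ecH^0(C,\mathcal I/\jJ)$. Granting this, the argument in the proof of Lem.\,\ref{prcodim} (the passage to preimages of tangent cones, the injection $\eta$, and the left‑exactness of $\ecH^0$ around $\xi$) shows that $(\Cc_C|\lL|^\delta_\kappa)_\red$ embeds into $\ecH^0(C,\mathcal I\cdot\oO_C(C))$, which by \eqref{eqZar} and \eqref{eqRP} equals $\ecH^0(\wt C,\oO_{\wt C}(K_{\wt C}-n^*K-R))$. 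Writing $\mathcal N:=\oO_{\wt C}(K_{\wt C}-n^*K)$, this says $\dim_C|\lL|^\delta_\kappa\le h^0(\wt C,\mathcal N(-R))$.

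It then remains to compare via Riemann--Roch on $\wt C$. From \eqref{gales1}--\eqref{gales2} (with $\lambda=0$) one gets $\dim|\lL|-\delta=h^0(\wt C,\mathcal N)-\alpha\ge\chi(\wt C,\mathcal N)$, using $\alpha\le\dim\ecH^1(C,\aA\cdot\oO_C(C))=h^1(\wt C,\mathcal N)$. On the other hand $\chi(\wt C,\mathcal N)=\chi(\wt C,\mathcal N(-R))+m(C)$, so $h^0(\wt C,\mathcal N(-R))-\chi(\wt C,\mathcal N)\le h^1(\wt C,\mathcal N(-R))-m(C)$. Now by Serre duality on each component $\wt C_i$, the contribution of $\wt C_i$ to $h^1(\wt C,\mathcal N(-R))$ equals $h^0$ of a line bundle of degree $m_i-k_i$ on $\wt C_i$, hence is at most $\max(0,m_i-k_i+1)\le m_i$; and for the index of $C_1$ this bound is strict, since $k_1\ge2$ and $m_1\ge1$ force $\max(0,m_1-k_1+1)\le m_1-1$. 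Summing over $i$, $h^1(\wt C,\mathcal N(-R))\le m(C)-1<m(C)$, whence $h^0(\wt C,\mathcal N(-R))<\chi(\wt C,\mathcal N)\le\dim|\lL|-\delta$. Combined with the previous paragraph, $\dim_C|\lL|^\delta_\kappa<\dim|\lL|-\delta\le\dim_C|\lL|^\delta=\dim_C|\lL|^\delta_\kappa$, a contradiction; so $-K\cdot C_1=1$.

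The step I expect to be the real obstacle is the equiclassical tangent‑cone bound $(\Cc_0\Lambda^\delta_\kappa)_\red\subseteq\ecH^0(C,\mathcal I/\jJ)$ (the analogue of \eqref{eqredtgtcone} for $\Lambda^\delta_\kappa$), together with the verification that $|\lL|^\delta_\kappa$ really does map to $\Lambda^\delta_\kappa$ near $C$. This should be available from the deformation theory of Diaz--Harris \cite{DH}: the stratum $\Lambda^\delta_\kappa$ is where the colength‑$\kappa$ ideal $\jJ\oO_{\wt C}$ propagates flatly, equivalently where $\aA$ and the ramification ideal $\rR$ both do, so Piene's formula \eqref{eqRP} reduces the analysis to the ideal $\mathcal I=n_*(\jJ\oO_{\wt C})\cap\oO_C$ exactly as \eqref{eqredtgtcone} uses $\aA$. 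Everything after that is Riemann--Roch on $\wt C$ and bookkeeping with the hypotheses $k_i\ge1$ and $k_1\ge2$.
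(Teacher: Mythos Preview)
Your proof is correct and rests on the same key ingredients as the paper's: the Diaz--Harris identification of $n_*(\jJ\oO_{\wt C})/\jJ$ with the reduced tangent cone of the equiclassical locus (so your hoped-for inclusion $(\Cc_0\Lambda^\delta_\kappa)_\red\subseteq\ecH^0(C,\mathcal I/\jJ)$ is in fact an equality, cited in the paper as \cite[(5.5), p.\,459]{DH}), Piene's formula \eqref{eqRP}, and the vanishing bound \eqref{eqVan}. The only difference is packaging. The paper subtracts a \emph{single} ramification point $\wt P\in\wt C_1$ to form $\aA':=\aA\cdot n_*\oO_{\wt C}(-\wt P)$, notes $\hH:=n_*(\jJ\oO_{\wt C})\subset\aA'$, and uses \eqref{eqVan} (with $k_1\ge2$, $k_i\ge1$) to get $h^1(\wt C,\mathcal N(-\wt P))=0$ directly; then $(\Cc_C|\lL|^\delta)_\red=\ecH^0(\wt C,\mathcal N)$ strictly contains $\ecH^0(\wt C,\mathcal N(-\wt P))\supset(\Cc_C|\lL|^\delta_\kappa)_\red$, contradicting openness of $|\lL|^\delta_\kappa$ in $|\lL|^\delta$. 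You instead subtract the full ramification divisor $R$ and run a Riemann--Roch count to bound $h^1(\wt C,\mathcal N(-R))\le m(C)-1$. The paper's one-point version is tidier---no need to track $\alpha$ or sum contributions over components, and one also gets $\alpha=0$ for free from the hypotheses $k_i\ge1$, so your chain of inequalities collapses to equalities---but the two arguments are the same at heart.
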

 \begin{proof}
 Fix a $C_1$.  Assume $C_1$ is not immersed, but $-K\cdot C_1\ge2$.
Then there's a point $\wt P$ in the normalization of $C_1$ at which $n$
ramifies.  Set $\aA':=\aA\cdot n_*\oO_{\wt C}(-\wt P)$.  Then owing to
Lem.\,\ref{leVan}, the restriction map
$$\ecH^0(C,\,\oO_C(C))\to \ecH^0(C,\,\oO_C/\aA')$$
is surjective.  Since $S$ is regular, the following restriction map too
is surjective:
$$\ecH^0(S,\,\lL)\to\ecH^0(C,\,\oO_C(C)).$$

Set $\hH:=n_*(\jJ\oO_{\wt C})$.  Then $\aA'\supset \hH$ owing to Piene's
Formula \eqref{eqRP}.  But $\hH\supset \jJ$.  Set $\Lambda:=\Dfl(C)$.
It follows, as in the proof of Lem.\,\ref{prcodim}, that the image of
$T_C|\lL|$ in $T_0\Lambda$ is transverse to $\aA' / \jJ$.  Thus the
image of $|\lL|$ in $\Lambda$ contains a 1-parameter equigeneric family
whose tangent space at $0$ is transverse to $\aA'/\jJ$ inside $\aA/\jJ$.

Diaz and Harris \cite[(5.5), p.\,459]{DH} proved that $\hH/\jJ$ is the
reduced tangent cone to the locus of equiclassical deformations.  Thus
the above 1-parameter family exits $|\lL|^\delta_\kappa$ while remaining
in $|\lL|^\delta$, contrary to the openness of $|\lL|^\delta_\kappa$ in
$|\lL|^\delta$.
\end{proof}

Finally, we consider the smoothness over $\CC$ of the relative Hilbert
scheme of a family.  To be precise, given a family of curves with
parameter space $B$ and total space $\cC_B$, denote by $\cC^{[n]}_B$ the
relative Hilbert scheme of $n$ points.  Further, if $B\subset|\lL|$,
take $\cC_B$ to be the total space of the tautological family.

\begin{lemma} \label{prsmooth}
 Assume $S$ regular, and $-K\cdot C_1\ge1$ for every component $C_1$ of
$C$.  Fix $n\ge0$.  Then the relative Hilbert scheme
$\smash{\cC_{|\lL|}^{[n]}}$ is smooth over $\CC$ along the Hilbert
scheme $C^{[n]}$ of $C$ over $\CC$.
\end{lemma}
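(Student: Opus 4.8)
\textbf{Proof plan for Lemma \ref{prsmooth}.}
The plan is to use the standard deformation-theoretic criterion for smoothness of a relative Hilbert scheme of points: over $\CC$, the relative Hilbert scheme $\cC^{[n]}_B$ is smooth at a point $[Z]\subset\cC_b$ precisely when the obstruction space vanishes, and for a family of curves on a surface this obstruction can be computed on the curve. Concretely, for a length-$n$ subscheme $Z$ of the curve $C$, the obstruction to smoothness of $\cC^{[n]}_{|\lL|}$ over $\CC$ at $[Z]$ lies in a group that, after the usual dévissage relating deformations of $Z$ in the surface to deformations of $Z$ in a moving curve, is governed by $\ecH^1(S,\,\lL\ox\jJ_Z)$ where $\jJ_Z$ is the ideal sheaf of $Z$ in $S$ (see e.g. the analysis in \cite{KST} or the Hilbert-scheme arguments in \cite{KP}); more precisely one needs surjectivity of $\ecH^0(S,\,\lL)\to\ecH^0(Z,\,\lL|_Z)$ together with the regularity hypothesis $\ecH^1(S,\,\oO_S)=0$ so that the family $|\lL|$ sees all first-order curve deformations. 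So the first step is to reduce the smoothness assertion to the vanishing of an appropriate $\ecH^1$ on $C$ or on $S$.

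First I would set up the deformation theory: fix $Z\in C^{[n]}$, and write down the tangent–obstruction complex for $\cC^{[n]}_{|\lL|}\to\CC$ at $[Z]$. Using that $S$ is regular, the tautological family over $|\lL|$ has the property that $T_C|\lL|\to\ecH^0(C,\,\oO_C(C))$ is surjective (the restriction $\ecH^0(S,\,\lL)\to\ecH^0(C,\,\oO_C(C))$ is onto since $\ecH^1(S,\,\oO_S)=0$), so the relevant obstruction for the relative Hilbert scheme collapses to $\ecH^1$ of a sheaf supported on $C$ twisted by $\oO_C(C)=\lL|_C$; the key point is that no obstructions come from the surface directions because $|\lL|$ is the \emph{full} linear system. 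The second step is then to identify this $\ecH^1$ with a group to which Lemma \ref{leVan} applies: taking $\wt M$ on $\wt C$ to be (a suitable twist of) the structure sheaf, Lemma \ref{leVan} bounds $\dim\ecH^1(C,\,\aA\cdot n_*\wt M\ox\oO_S(C))$ by a sum over the components $\wt C_i$ of $\max(0,\,1+\deg(\wt M^{-1}(n^*K)|\wt C_i))$.

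The third step is the numerical input: since $-K\cdot C_1\ge1$ for every component $C_1$ of $C$, and since $n^*K$ restricted to $\wt C_i$ has degree $K\cdot C_i<0$, each term $1+\deg(\wt M^{-1}(n^*K)|\wt C_i)$ is controlled — for the relevant choice of $\wt M$ (roughly, $\wt M=\aA^{-1}\ox n^*\oO_S(C)\ox(\text{ideal of }n^{-1}Z)$, so that the target group is exactly the obstruction group) the degree on each $\wt C_i$ is $-K\cdot C_i - (\text{length of }Z\text{ on }C_i)\le -1$, forcing the maximum to be $0$. Hence the obstruction group vanishes, and the infinitesimal criterion gives smoothness of $\cC^{[n]}_{|\lL|}$ over $\CC$ at $[Z]$; since $Z\in C^{[n]}$ was arbitrary, this proves smoothness along $C^{[n]}$.

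The main obstacle I anticipate is the first step — cleanly identifying the obstruction space for the relative Hilbert scheme over $\CC$ (as opposed to over $|\lL|$) and showing it is exactly an $\ecH^1$ of the form handled by Lemma \ref{leVan}. One must be careful that the relevant deformations are those of the pair (point-scheme $Z$, curve in $|\lL|$) and that the surjectivity coming from regularity of $S$ genuinely kills the would-be obstruction contributions from moving $C$ inside $S$; the rest (applying Lemma \ref{leVan} and checking the degree inequality componentwise using $-K\cdot C_i\ge1$) is then routine. A secondary technical point is handling the possibly disconnected or non-reduced-looking combinatorics of $Z$ spread across several components $\wt C_i$, but the component-by-component bound in \eqref{eqVan} is designed exactly for that.
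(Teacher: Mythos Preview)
Your Step~3 is where the plan breaks down, and the error is not merely technical. If the obstruction were really an $\ecH^1$ of a sheaf on $C$ (or $\wt C$) twisted by the ideal of $Z$, then applying \eqref{eqVan} would require, on each component $\wt C_i$, that $\deg\bigl(\wt M^{-1}(n^*K)\big|\wt C_i\bigr)\le -1$, i.e.\ $-K\cdot C_i \ge 1 + (\text{length of }Z\text{ on }\wt C_i)$. The hypothesis $-K\cdot C_i\ge 1$ gives you nothing of the sort once $n$ is large; indeed your displayed inequality ``$-K\cdot C_i - (\text{length of }Z\text{ on }C_i)\le -1$'' is simply false when the length is $0$. (There is also the issue that $Z$ may be supported at singular points of $C$, so ``$n^{-1}Z$'' is not a well-defined divisor on $\wt C$.) So the numerical input you have is too weak for the obstruction you have written down, and no amount of care in Step~1 will fix this.

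The paper's proof avoids any $Z$-dependent vanishing by factoring through the miniversal deformation $\Lambda=\Dfl(C)$. It first shows that $\cC^{[n]}_\Lambda$ is smooth over $\CC$ along $C^{[n]}$ \emph{unconditionally} (this is the hard part, using Fogarty's theorem and a local reduction to plane curves), and then that the image of $T_z\cC^{[n]}_\Lambda$ in $T_0\Lambda$ always contains the subspace $\aA/\jJ$ (again unconditionally, via the Fantechi--G\"ottsche--van Straten analysis of deformations of the pair $(\oO_{C,\Sigma},I_Z)$). Only at the very end are the hypotheses used: regularity of $S$ plus $-K\cdot C_i\ge 1$ imply, via Lemma~\ref{leVan} with $\wt M=\oO_{\wt C}$, that $\ecH^0(S,\lL)\to\ecH^0(C,\oO_C/\aA)$ is surjective, so that the images of $T_C|\lL|$ and $T_z\cC^{[n]}_\Lambda$ together span $T_0\Lambda$. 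The point is that the $Z$-dependence is entirely absorbed into the first two steps, leaving a $Z$- and $n$-independent surjectivity condition; this is precisely why the weak hypothesis $-K\cdot C_i\ge 1$ suffices.
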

\begin{proof}
The proof has three steps: (1) show that $\cC_\Lambda^{[n]}$ is
 smooth over $\CC$ along $C^{[n]}$; (2) show that, for any point $z\in
C^{[n]}$, the image in $T_0\Lambda$ of the tangent space
$\smash{T_z\cC_\Lambda^{[n]}}$ contains $\aA/\jJ$ in $T_0\Lambda$; and
(3) show that $\smash{\cC_{|\lL|}^{[n]}}$ is smooth over $\CC$ along
$C^{[n]}$.  The hypothesis that $S$ is regular and $-K\cdot C_1\ge1$ is
not used in the first two steps.

Step (1) was done in \cite[Prp.\,17]{S}.  Here's the idea.  First, embed
$\cC_\Lambda^{[n]}$ in $S^{[n]}\times \Lambda$, where $S^{[n]}$ is the
Hilbert scheme.  The latter is smooth by Fogarty's theorem.  Form the
tangent bundle-normal bundle sequence (constructed barehandedly as (6)
in \cite{S}); it's the dual of the Second Exact Sequence of K\"ahler
differentials \cite[Prp.\,8.12, p.\,176]{H} .  It shows  the
question is local analytic about the singularities of $C$, as the
smoothness in question is equivalent to the surjectivity of the
right-hand map owing to \cite[(17.12.1)]{EGAIV4}.  So we may replace $C$
by an affine plane curve $\{f=0\}$.

Take a vector space $\VV$ of polynomials containing $f$ and also every
polynomial of degree at most $n$.  Form the tautological family
$\cC_\VV/\VV$.  Its relative Hilbert scheme $\smash{\cC_\VV^{[n]}}$ is
smooth over $\CC$ along $C^{[n]}$ owing to the analogous tangent
bundle-normal bundle sequence; its right-hand map is surjective by
choice of $\VV$.  Finally, as $\Lambda$ is versal, there's a map of
germs $\lambda\:(\VV,0)\to (\Lambda,0)$ such that $\cC_\VV^{[n]}$ is the
pullback of $\cC_\Lambda^{[n]}$.  It's smooth as the map on tangent
spaces is surjective.  Thus $\cC_\Lambda^{[n]}$ is smooth over $\CC$ along
$C^{[n]}$, as desired.

To do Step (2), we may assume that $z$ represents a subscheme $Z$ of $C$
supported on its singular locus $\Sigma$, because the map of tangent
spaces (essentially the map on the left in \cite[(6)]{S}\,)
is the product of the corresponding maps at the various points
$p$ in the support of $Z$, and these maps are clearly surjective at the
$p$ where $C$ is smooth.  Set $\oO:=\oO_{C,\Sigma}$, and let $I\subset
\oO$ be the ideal of $Z$.  Then $T_z\cC_\Lambda^{[n]}$ is the set of
first-order deformations of the inclusion map $I\into \oO$.  Further,
the map $T_z\cC_\Lambda^{[n]} \to T_0\Lambda$ forgets the inclusion, and
just keeps the deformation of $\oO$.

Let $J$ be the Jacobian ideal of $\oO$, the ideal of $\Sigma$.  Then
\eqref{eqtnsp} yields $T_0\Lambda=\oO/J$.  Further, let $A$ be the
conductor ideal of $\oO$.

The map $T_z\cC_\Lambda^{[n]} \to T_0\Lambda$ factors through the set
$D(\oO,I)$ of first-order deformations of the pair $(\oO,I)$ with $I$
viewed as an abstract $\oO$-module.  The map $D(\oO,I)\to T_0\Lambda$ was
studied by Fantechi, G\"ottsche and van Straten in \cite[Sec.\,C]{FGvS};
they showed that, in $\oO/J$,  the image of this map contains  $A/J$.

It remains to show $T_z\cC_\Lambda^{[n]} \to D(\oO,I)$ is surjective.
So take $(\oO',I')\in D(\oO,I)$.  As $\oO$ is Gorenstein,
$\Ext^1_\oO(I,\,\oO)=0$.  Hence, since deformations are flat, the
Property of Exchange \cite[Thm.\,(1.10)]{AK} implies this natural map is
bijective:
 \[ \rHom_\oO(I',\,\oO')\ox_{\oO'}\oO \risom \rHom_\oO(I,\,\oO).\] So
the inclusion map $I\into \oO$ lifts to a map $I'\to \oO'$.  The
latter is injective and its cokernel is flat owing to the Local
Criterion of Flatness, as $\oO'$ is flat and $I'\to \oO'$ reduces to
an injection with flat cokernel, namely, $I\into \oO$.

Finally, consider Step (3).  Since $\Lambda$ is versal, there exists a
map of germs $(|\lL|,C)\to (\Lambda,0)$ such that the germ
$(\cC_{|\lL|}^{[n]},z)$ is the pullback of the germ
$(\cC_{\Lambda}^{[n]},z)$, which is smooth over $\CC$ by Step~(1).
Since $(|\lL|,C)$ and $ (\Lambda,0)$ are smooth over $\CC$, the pullback
$(\cln,z)$ is therefore smooth over $\CC$ by general principles, if the
images in $T_0\Lambda$ of the tangent spaces $T_C|\lL|$ and $T_z
\cC^{[n]}_\Lambda$ sum to $T_0\Lambda$.

  Owing to \eqref{eqRes} and to Step (2), the latter holds if this
composition is surjective:
 \[ \ecH^0(S,\,\lL) \to \ecH^0(C,\,\oO_C(C)) \to \ecH^0(C,\,\oO_C / \aA).\]
 However, the first map is surjective as  $S$ is regular,
and the second map is surjective by Lem.\,\ref{leVan} with $\wt
M=\oO_{\wt C}$ owing to the hypothesis  $-K\cdot C_1\ge1$.
\end{proof}

\section{Proof of the main results}\label{Potmr}
Thm.\,\ref{ththreshold} can now be proved by revisiting the construction
in \cite{KST} of the universal polynomial $G_{\delta}(S,\lL)$ and making
use of the  lemmas in the preceding section.

\begin{proof}[{\scshape Proof of Thm.\,\ref{ththreshold}}] First,
\eqref{equpbd} yields $\codim_C|\lL|^\delta\le\delta$ for all
$C\in|\lL|^\delta$.  Also, $\ecH^1(S,\,\oO_S)=0$ as $S$ is rational,
and if $C\in(|\lL|^\delta-V)$, then $\ecH^1(C,\,\aA\cdot\oO_C(C))=0$
by \eqref{eqVan}; hence, if $C\in(|\lL|^\delta-V)$, then
\eqref{equpbd} yields $\codim_C|\lL|^\delta\ge\delta$.  Therefore, if
$\codim V>\delta$, then $\codim_C|\lL|^\delta=\delta$ for all $C$ in the
closure \/ $\bigl(|\lL|^\delta-V\bigr)\?{\phantom{I}}$, and then
$\bigl(|\lL|^\delta-V\bigr)\?{\phantom{I}} = |\lL|^\delta$.

Note that Lem.\,\ref{leimmersed} and the discussion before it imply
that, if $C \in \bigl(|\lL|^\delta-V\bigr)\?{\phantom{I}}$, then $C\in
|\lL|^\delta_\kappa$ if and only if $C$ is immersed, and that
$|\lL|^\delta_\kappa$ is open and dense in $|\lL|^\delta$.  Further, the
last assertion of Lem.\,\ref{prcodim} now implies $|\lL|^\delta_\kappa$
is smooth at $C$ if $C\notin V$.

It remains to compute $\deg |\lL|^\delta$ assuming $\codim V>\delta$.
Denote by $g$ the common arithmetic genus $p_aD$ of the $D\in|\lL|$.
Bertini's theorem \cite[Cor.\,10.9, p.\,274]{H} yields a $\delta$-plane
$\PP \subset |\lL|$ avoiding
$V\bigcup\big(|\lL|-|\lL|^\delta_\kappa\big)$ and such that
$\cC^{[n]}_{\PP}$ is  smooth over $\CC$ for $n\le g$.  But
$\cC^{[n]}_{\PP}$ is, by \cite[Thm.\,5, p.\,5]{AIK}, cut out of $\PP
\times S^{[n]}$, where $S^{[n]}$ is the Hilbert scheme, by a
transversally regular section of the rank-$n$ bundle $\lL^{[n]}$ that is
obtained by pulling $\lL$ back to the universal family and then pushing
it down.  Hence the topological Euler characteristic
$\chi(\cC^{[n]}_{\PP})$ can be computed by integrating polynomials in
the Chern classes of $\lL^{[n]}$ and $S^{[n]}$.  But, as Ellingsrud,
G\"ottsche, and Lehn \cite{EGL} show, such integrals admit universal
polynomial expressions in the Chern classes of $S$ and $\lL$.

Following \cite{KKV}, define $n_h(\PP)$ by this relation:
 \[\ts
\sum_{n=0}^\infty q^n\chi(\cC^{[n]}_{\PP})
 = \sum_{h=-\infty}^g n_h(\PP) q^{g-h} (1-q)^{2h-2}.\] For
$D\in|\lL|$, define $n_h(D)$ similarly.  By additivity of the
Euler characteristic, these definitions are compatible: $\chi(\PP,n_h) =
n_h(\PP)$ where $n_h\:\PP \to \ZZ$ is the constructible function $b
\mapsto n_h(\cC_b)$.  By \cite[App.\,B.1]{PT3},
  if $D$ is reduced of geometric genus $\wt g$, then $n_h(D) = 0$ for
$h<\wt{g}(D)$.  Thus the $n_h(\PP)$ admit universal polynomial expressions.

For each $\epsilon$, Lem.\,\ref{prcodim} implies $|\lL|^\epsilon$ is of
codimension $\epsilon$ at every $D\in (|\lL|^\epsilon-V)$.  So
$|\lL|^\epsilon-V$ is empty if $\epsilon>\dim|\lL|$.  Further, replacing
$\PP$ by a more general $\delta$-plane if neccessary, we may assume
$\PP\bigcap|\lL|^\epsilon$ is empty if $\delta<\epsilon\le\dim|\lL|$.
Then there are only finitely many $D\in\PP$ of cogenus $\delta$, and
none of greater cogenus.  Thus
 \[\ts
n_{g-\delta}(\PP) = \sum_{D \in \PP \cap
   |\lL|^\delta}  n_{\wt{g}}(D). \]

Alternatively, instead of using \eqref{equpbd} to bound the
$\codim|\lL|^\epsilon$, we could use \cite[Cor.\,9]{MS}, which asserts
that, given any family of locally planar curves whose $n$th relative
Hilbert scheme is smooth over $\cC$ and any $\epsilon\le n$, the curves
of cogenus $\epsilon$ form a locus of codimension at least  $\epsilon$
in the base.

Finally, as each $D \in \PP \cap |\lL|^\delta$ is immersed,
$n_{\wt{g}}(D) = 1$ by \cite[Eqn.\,5]{S} plus \cite[Prp.\,3.3]{B}.
Alternatively, this statement follows from \cite[Thm.\,A]{S}, because
$|\lL|^\delta$ is smooth at $D$.  Thus $n_{g-\delta}(\PP) =
\deg|\lL|^\delta$.  \end{proof} \vspace{-\medskipamount}

Lastly, we prove Prp.\,\ref{prNC}, which provides conditions under which
the nodal curves in the Severi variety $|\lL|^\delta$ form a dense open
subset $|\lL|^\delta_+$.  It is well known that $|\lL|^\delta_+$ is open and
dense if $S$ is the plane; see \cite[Thm.\,2, p.\,220]{Z} and
\cite[(10.7), p.\,847]{ACG} and \cite[Prp.\,2.2, p.\,355]{CH}.  Similar
arguments work if $S$ is a Hirzebruch surface; see \cite[Prp.\,8.1,
p.\,74]{Va}.  The broadest statement is given in \cite[Thm.\,2.8,
p.\,8]{Ty}.

However, even that statement is not broad enough to cover our needs.
Moreover, our approach appears to be new in places.  In addition, the
appendix develops the ideas in \cite{Ty} further, so as to provide
another proof of Prp.\,\ref{prNC} and the codimension statement in
Thm.\,\ref{ththreshold}.

\vspace{-\smallskipamount}
\begin{proof}[{\scshape Proof of Prp.\,\ref{prNC}}] Clearly,
$\deg|\lL|^\delta_+ = \deg |\lL|^\delta$ if $|\lL|^\delta_+$ is open and
dense in $|\lL|$.  Thus Thm.\,\ref{ththreshold} and the first assertion of
Prp.\,\ref{prNC} yield the second.

To prove the first assertion, assume $C\in|\lL|^\delta$, fix $P\in C$, and
consider the local Milnor number $\mu(C,P)$, which vanishes if  $C$ is
smooth at $P$.  It is, by \cite[Thm.\,2.6(2), p.\,114]{GLS}, upper
semicontinuous in this sense: there is an (analytic or \'etale)
neighborhood $B$ of the point in $|\lL|^\delta$ representing $C$ and a
neighborhood $U$ of $P$ in the tautological total space $\cC_B$ such
that, for each $b\in B$,
\abovedisplayskip=3.5pt plus 3pt\belowdisplayskip=\abovedisplayskip
 \begin{equation}\label{eqmuuc} \ts
\mu(C,P)\ge\sum_{Q\in\cC_b\cap U}\mu(\cC_b,Q).
\end{equation}
So the total Milnor number $\mu(C):=\sum_z\mu(C,z)$ too is upper
semicontinuous in $C$.

Therefore, $|\lL|^\delta$ always contains a dense open subset
$|\lL|^\delta_\mu$ on which $\mu(C)$ is locally constant.  So fix
$C\in|\lL|^\delta_\mu$.  Then after $B$ is shrunk, equality holds in
\eqref{eqmuuc}.  Therefore, there is a section $B\to \cC_B$ along which
the family is equisingular by work of Zariski's \cite{Z2, Z70}, of L\^e
and Ramanujam's \cite{LR} and of Teissier's\,---\,see both
\cite[Prp.\,2.62, p.\,359]{GLS} and \cite[Thm.\,5.3.1, p.\,123]{Te76},
as well as the historical note \cite[5.3.10, p.\,129]{Te76}.

Consider Milnor's Formula $\mu(C) =2\delta-\sum_{Q\in C}(r(Q)-1)$ where
$r(Q)$ is the number of branches of $C$ at $Q$; see \cite[Prp.\,3.35,
p.\,208]{GLS}.  It implies $\mu(C)\ge\delta$, with equality if and only
if $C$ is $\delta$-nodal.  So the nodal locus $|\lL|^\delta_+$ is always
a union of components of $|\lL|^\delta_\mu$.  Thus to complete the proof
of Prp.\,\ref{prNC}, it suffices to show $|\lL|^\delta_\mu-V$ consists
of nodal curves.  So assume $C\in |\lL|^\delta_\mu-V$.

First of all, $C$ is immersed by Lem.\,\ref{leimmersed}.  So
Lem.\,\ref{prcodim} implies $|\lL|^\delta_\kappa$ is smooth at $C$ with
tangent space equal to $\ecH^0(\wt C,\,\oO_{\wt C}(K_{\wt C}-n^*K))$.

Form the composition $B\to \cC_B\to S$ of the above equisingular section
and of the projection.  Denote the preimage of $P\in S$ by $B'$.
Evidently, $\dim_C B - \dim_C B' \le 2$.

By equisingularity, $P$ has the same multiplicity $m$ on every
$D\in B'$.  Denote by $S'$ the blowup of $S$ at $P$, by $E$ the exceptional
divisor, by $C'$ the strict transform of $C$.  Set $\lL':=\oO_{S'}(C')$
and $\delta':= \delta-m(m-1)/2$.  Taking strict transforms gives a map
$B'\to |\lL'|^{\delta'}$.  It is injective as taking images gives an
inverse.

Denote by $n'\:\wt C\to C'$ the normalization map, by $K'$ the
canonical class of $S'$.  Then \eqref{eqconecnt} yields
 $(\Cc_{C'}|\lL'|^{\delta'})_\red \subset
 \ecH^0(\wt C,\,\oO_{\wt C}(K_{\wt C}-n^{\prime*}K'))$.  Therefore,
\begin{align}
\dim\ecH^0(\wt C,\,&\oO_{\wt C}(K_{\wt C}-n^*K))\label{eqH02}
 - \dim\ecH^0(\wt C,\,\oO_{\wt C}(K_{\wt C}-n^{\prime*}K'))\\
   &{\le \dim_C|\lL|^\delta-\dim_{C'} |\lL'|^{\delta'}
        = \dim_C B - \dim_C B' \le 2.}\label{eqby2}
\end{align}
\goodbreak

The groups in \eqref{eqH02} belong to the long exact cohomology sequence
arising from
$$0\to\oO_{\wt C}(K_{\wt C}-n^{\prime*}K')\to\oO_{\wt C}(K_{\wt C}-n^*K)
   \to\oO_{n^{\prime*}E}\to 0.$$
Further, $\ecH^1(\wt C,\,\oO_{\wt C}(K_{\wt C}-n^*K))=0$ by
\eqref{eqVan} as $C\notin V$.  Hence \eqref{eqH02} is equal to
\begin{equation}\label{eq2eqto}
\dim \ecH^0(\oO_{n^{\prime*}E})
 - \dim \ecH^1(\wt C,\,\oO_{\wt C}(K_{\wt C}-n^{\prime*}K')).
\end{equation}
But $\deg(n^{\prime*}E)=m$; so $\dim \ecH^0(\oO_{n^{\prime*}E})=m$.

Denote by $ C_1,\dotsc, C_h$ the components of $ C$, by $\wt C_i$
the normalization of $C_i$.  Set
$$k_i:=-K\cdot C_i \text{\quad and\quad} m_i:=\mult(P,\,C_i) =
  \deg(n^{\prime*}E|\wt C_i)\ge0.$$
Now, $n^{\prime*}K'=n^*K+n^{\prime*}E$.  Therefore, \eqref{eqZar} and
\eqref{eqVan} yield
$$ \dim \ecH^1(\wt C,\,\oO_{\wt C}(K_{\wt C}-n^{\prime*}K'))
\ts   \le  \sum_{i=1}^h\max(0,\,1-k_i+m_i).$$
Note $m=\sum_{i=1}^h m_i$.  Consequently, \eqref{eq2eqto} and
\eqref{eqby2} yield
\begin{equation}\label{eqbd3}
\ts \sum_{i=1}^h s_i\le2
 \text{\quad where\quad} s_i:=m_i-\max(0,\,1-k_i+m_i).
\end{equation}

Note $m_i\ge s_i\ge0$ for all $i$, as $0\le\max(0,\,1-k_i+m_i)\le m_i$
since $k_i\ge1$ owing to (2) of Thm.\,\ref{ththreshold}.  Also, $s_i=0$
if $k_i=1$ for any $i$ and any $m_i$; conversely, if $s_i=0$ and
$m_i\ge1$, then $k_i=1$.  Further, $m_i=s_i$ if and only if $k_i\ge
m_i+1$, as both conditions are obviously equivalent to
$\max(0,\,1-k_i+m_i)=0$.  Clearly, $k_i\le m_i+1$ if and only if
$s_i=k_i-1$.

Using \eqref{eqby2}, let's now rule out $m\ge3$.  Aiming for a
contradiction, assume
\begin{equation}\label{eqms}
m_1\ge\dotsb \ge m_h \tq{and}m_1+\dotsb+m_h=m\ge3.
\end{equation}
Now, \eqref{eqbd3}
yields $s_1\le 2$ as $s_i\ge0$ for all $i$.  So if $m_1\ge3$, then
$m_1-2\le1-k_1+m_1$; whence, $k_1\le3$, contrary to (4) of
Prp.\,\ref{prNC}.  Thus \eqref{eqms} yields $2\ge m_1\ge m_2\ge1$.

Suppose $m_1=2$.  Then (5) of Prp.\,\ref{prNC} rules out $k_1=1$ and
$k_2=1$.  So $k_1\ge2$ and $k_2\ge 2$.  Suppose $k_1=2$.  Then $s_1=1$.
So $s_2\le1$.  If $m_2=2$, then $k_2=2$, contrary to (6) of
Prp.\,\ref{prNC}.  If $m_2=1$, then already $k_1=2$ is contrary to (7)
of Prp.\,\ref{prNC}.  Suppose $k_1\ge3$.  Then $s_1=2$.  So $s_2=0$.  So
$k_2=1$.  But this case was already ruled out.  Thus the case $m_1=2$ is
ruled out completely.

Lastly, suppose $m_1=1$.  Then \eqref{eqms} yields $m_2=1$ and $m_3=1$
too.  So (8) of Prp.\,\ref{prNC} yields $k_i\ge2$ for $i=1,2,3$.  Hence
$s_i=1$ for $i=1,2,3$, contradicting \eqref{eqbd3}.  Thus $m=2$, as
claimed.

Finally, given $m=2$, let's show $P$ is a simple node.  Since $C$ is
immersed at $P$, it is locally analytically given by an equation of the
form $y^2 = x^{2k}$ for some $k\ge1$.  Denote by $\wt P,\,\wt Q\in\wt C$
the points above $P$ on the branches with equations $y=x^k$ and $y=-x^k$.
Then \eqref{eqZar} and \eqref{eqVan} imply
$$ \dim \ecH^1(\wt C,\,\oO_{\wt C}(K_{\wt C}-n^*K-\wt P-\wt Q))=0,$$
because $k_i\ge1$ for all $i$ owing to (2) of Thm.\,\ref{ththreshold}
and because either $k_i\ge2$ for $i=1,2$ if $\wt P\in \wt C_1$ and $\wt
Q\in \wt C_2$ owing to (9) of Prp.\,\ref{prNC} or $k_1\ge3$ if $\wt
P,\wt Q\in \wt C_1$ owing to (10) of Prp.\,\ref{prNC}.  Hence the
following restriction map is surjective:
$$\ecH^0(\wt C,\,\oO_{\wt C}(K_{\wt C}-n^*K))
 \onto \ecH^0(\wt C,\,\oO_{\wt P+\wt Q}).$$

Therefore, there's a section of $\oO_{\wt C}(K_{\wt C}-n^*K)$ that
doesn't vanish at $\wt P$, but does at $\wt Q$.  Correspondingly,
there's a first-order deformation of $C$.  Say it's given locally by
$y^2 - x^{2k} + \epsilon g(x,y)$.  Then $g(t,t^k)$ is of degree $k$, but
$g(t,-t^k)$ is of degree $k+1$.  Clearly, any such $g$ is, up to scalar
multiple, necessarily of the form $g(x,y) = x^k + y + O(x^{k+1}, y^2)$.
However, the Jacobian ideal of the singularity is $\langle y,\,
x^{2k-1}\rangle$.  This ideal must contain $g(x,y)$ as the deformation under
consideration is equisingular and as the Jacobian ideal is equal to the
equisingular ideal by \cite[Lem.\,2.16, p.\,287]{GLS}.  Hence $k=1$.
Thus $P$ is an simple node of $C$, as desired.
\end{proof}

\appendix
\section{An alternative proof by Ilya Tyomkin}\label{apap}

Our goal is to use the deformation theory of maps to provide an
alternative proof of Prp.\,\ref{prNC} and the codimension statement in
Thm.\,\ref{ththreshold}. The general idea goes back to Arbarello and
Cornalba \cite{AC}, but the proof contains new ingredients, most of
which were introduced in \cite{Ty}.

\subsection{Notation} \noindent\medskip

Let $\delta,\, S,\, \lL,\, K,\, |\lL|^\delta,\, |\lL|^\delta_+$ be as in
the Introduction.  Again, we work over complex numbers $\CC$, but as is
standard, we denote the residue field at a point $p$ by $k(p)$.
Moreover, as our treatment is purely algebraic, all the statements and
proofs are valid over an arbitrary algebraically closed field of
characteristic 0.

Given a morphism $f\: X\to Y$, and $p^1,\dotsc,p^r\in X$ points where
$X$ is smooth, $\Def(X,f;\,\underline{p})$ denotes the functor of
deformations of $(X,f;\,p^1,\dotsc, p^r)$; i.e., if $(T,0)$ is a local
Artinian $\CC$-scheme, then $\Def(X,f;\,\underline{p})(T,0)$ is the set
of isomorphism classes of this data: $(X_T,f_T;p^1_T,\dotsc,
p^r_T;\iota)$ where $X_T$ is $T$-flat, each $p^i_T\:T\to X_T$ is a
section, $f_T\:X_T\to Y\times T$ is a $T$-morphism, and $\iota$ is
an isomorphism
 $$\iota\:(X_0,f_0;p^1_0,\dotsc,p^r_0)\risom (X,f;\,p^1,\dotsc,p^r).$$
Let $\Def^1(X,f;\,\underline{p})$ denote the set of first-order
deformations $\Def(X,f;\,\underline{p})(T,0)$ where $T:=\Spec(\CC[\epsilon])$
and  $\CC[\epsilon]$ is the ring of dual numbers.

If $X$ and $Y$ are smooth, set $\nN_f:=\Coker(T_X\to f^*T_Y)$; it's
the \textit{normal sheaf}.

\subsection{Three Lemmas}
\begin{lemma}\label{cl:def1}
Let $(C;p^1,\dotsc, p^r)$ be a smooth curve with marked points, and $f\:C\to
S$ a map that does not contract components of $C$. Then
there is a natural exact sequence
 $$\ts 0\to \bigoplus_{i=1}^r
\Tt_{p^i}(C)\to {\Def}^1(C,f;\,\underline{p})\to H^0(C, \nN_f)\to 0.$$
\end{lemma}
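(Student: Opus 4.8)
The plan is to split the deformation functor along the forgetful map $\phi\colon\Def^1(C,f;\underline p)\to\Def^1(C,f)$ that drops the marked points, and to prove two things: first, that $\phi$ is surjective with kernel canonically $\bigoplus_{i=1}^r\Tt_{p^i}(C)$; and second, that $\Def^1(C,f)\cong H^0(C,\nN_f)$. Splicing these two facts gives the asserted short exact sequence, and its naturality in $(C,f;\underline p)$ is automatic since every term is produced functorially.

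I would first dispose of the forgetful map. Given a first-order deformation $(C_T,f_T)$ of $(C,f)$ over $T:=\Spec\CC[\epsilon]$, the structure morphism $C_T\to T$ is smooth along each $p^i$, since its fiber $C$ is smooth there; hence each $p^i$ extends to a section $p^i_T$ of $C_T\to T$, so the marked deformation exists and $\phi$ is surjective. For the kernel, assume $(C_T,f_T)$ is the trivial deformation; then the only remaining datum is a choice of section of $C\times T\to T$ through each $p^i$, and two such choices give isomorphic marked deformations only through a $T$-automorphism of $C\times T$ that commutes with $f\times\mathrm{id}$ and restricts to the identity on $C$. Writing such an automorphism as $\mathrm{id}+\epsilon D$ with $D\in H^0(C,T_C)$, the commutation forces $df\circ D=0$, whence $D=0$ because $df\colon T_C\to f^*T_S$ is injective; and here is the sole place the hypothesis is used: as $f$ contracts no component, on each connected component $C_j$ of the smooth curve $C$ the map $df|_{C_j}$ is nonzero, hence injective since $T_{C_j}$ is a line bundle. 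Therefore no such automorphism is nontrivial; the section through $p^i$ is a genuine invariant, the sections through a fixed smooth point form a torsor under $\Tt_{p^i}(C)$ with the constant section as origin, and so $\Ker\phi\cong\bigoplus_i\Tt_{p^i}(C)$. The resulting inclusion $\bigoplus_i\Tt_{p^i}(C)\to\Def^1(C,f;\underline p)$ is then injective, and $\CC$-linear by the usual compatibility of the vector space structure on first-order deformations with forgetful maps.

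Next I would identify $\Def^1(C,f)$ with $H^0(C,\nN_f)$. Because $f$ contracts no component, $df\colon T_C\to f^*T_S$ is injective, so there is a short exact sequence of sheaves $0\to T_C\to f^*T_S\to\nN_f\to0$; hence the two-term complex $[\,T_C\to f^*T_S\,]$, placed in degrees $0$ and $1$, is quasi-isomorphic to $\nN_f$ sitting in degree $1$. By the standard deformation theory of a morphism to a fixed target (the idea going back to \cite{AC}; concretely, trivialize $C_T$ over an affine open cover of $C$, record on each open the ensuing section of $f^*T_S$, and impose the cocycle conditions on overlaps), first-order deformations of $(C,f)$ are classified by $\mathbb{H}^1(C,[\,T_C\to f^*T_S\,])$, which the quasi-isomorphism identifies with $H^0(C,\nN_f)$; obstructions would live in $\mathbb{H}^2=H^1(C,\nN_f)$ but are irrelevant here. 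Combining this with the previous paragraph yields the exact sequence.

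The content is essentially routine. The step that needs the most care is the kernel computation in the second paragraph: one must check that $\phi$ is linear and, above all, that no spurious automorphism of the deformed curve identifies two distinct lifts of the marked points — which is precisely where the injectivity of $df$, i.e.\ the hypothesis that $f$ contracts no component, comes in. Once the sheaf sequence $0\to T_C\to f^*T_S\to\nN_f\to0$ is in hand, the identification $\Def^1(C,f)=\mathbb{H}^1\cong H^0(C,\nN_f)$ is formal.
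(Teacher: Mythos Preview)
Your proof is correct and follows essentially the same route as the paper: the forgetful map $\phi$, its surjectivity from smoothness at the $p^i$, the identification of $\Ker\phi$ with $\bigoplus_i\Tt_{p^i}(C)$, and then $\Def^1(C,f)\cong H^0(C,\nN_f)$ via the injectivity of $df$. The only cosmetic difference is that the paper phrases the last identification as $\Ext^1(\Ll_{C/S},\oO_C)$ using the cotangent complex, whereas you use the dual hypercohomology of $[T_C\to f^*T_S]$; and you are more explicit than the paper about why no nontrivial automorphism of the trivial deformation can identify distinct section lifts, which is a point the paper leaves implicit.
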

\begin{proof}
Consider the forgetful map $\phi\:{\Def}^1(C,f;\,\underline{p})\to
{\Def}^1(C,f)$.  It is surjective by the infinitesimal lifting property,
since $C$ is smooth at all the $p^i$. Its kernel
is canonically isomorphic to $\bigoplus_{i=1}^r{\Def}^1(p^i\to
C)$, so to $\bigoplus_{i=1}^r \Tt_{p^i}C$. Finally, since $T_C\to f^*T_S$ is
injective, ${\Def}^1(C,f)=\Ext^1(\Ll_{C/S},\oO_C)=H^0(C, \nN_f)$, where
$\Ll_{C/S}$ is the cotangent complex of $f\:C\to S$; see
\cite[(2.1.5.6), p.\,138; Prp.\,3.1.2, p.\,203; Thm.\,2.1.7, p.\,192]{I71-72}
or \cite[pp.\,374--376]{H73}.
\end{proof}

\begin{lemma}\label{cl:resdef}
Let $C$ be a smooth curve, $f\:C\to S$ a map, $D\subset S$ a closed
curve.  Set $Z:=D\times_SC$, and assume $Z$ is reduced and
zero-dimensional.  Let $g\:Z\to D$ be the inclusion, and set
$T:=\Spec(\CC[\epsilon])$ and $(Z_T,g_T):=(C_T,f_T)\times_{S\times
T}(D\times T)$.  Then sending $(C_T,f_T)$ to $(Z_T,g_T)$ defines a map
$d\psi\:\Def^1(C,f)\to \Def^1(Z,g)$. Furthermore,
$d\psi(H^0(C,\nN_f^{\rm tor}))=0$.
\end{lemma}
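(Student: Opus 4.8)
The plan is to realize $d\psi$ explicitly on the level of deformation data and then track what happens to a deformation coming from a torsion section. First I would make the definition of $d\psi$ precise: given $(C_T,f_T)\in\Def^1(C,f)$, one forms $(Z_T,g_T):=(C_T,f_T)\times_{S\times T}(D\times T)$; since $Z=D\times_SC$ is reduced and zero-dimensional, $Z_T$ is $T$-flat (it is cut out locally by equations whose reductions form a regular system), so $(Z_T,g_T)$ is a genuine element of $\Def^1(Z,g)$, and this assignment is functorial in the Artinian base, hence defines the linear map $d\psi$. One sees immediately that $d\psi$ factors through the restriction $H^0(C,\nN_f)\to H^0(Z,\nN_f|_Z)$ followed by the natural identification of the latter with (a subspace of) $\Def^1(Z,g)$, because Lem.~\ref{cl:def1} identifies $\Def^1(C,f)$ with $H^0(C,\nN_f)$ and the fiber-product construction is local on $C$ near the finitely many points of $Z$.

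The key point is then purely local and sheaf-theoretic: the torsion subsheaf $\nN_f^{\rm tor}$ is supported at the finitely many points where $f$ is not immersive, and $Z=D\times_SC$ is, by hypothesis, reduced and finite, hence supported at a finite set of \emph{smooth} points of the map in a strong sense — more precisely, I would show that $Z$ is disjoint from $\Supp(\nN_f^{\rm tor})$, or, if it is not disjoint, that the composite $H^0(C,\nN_f^{\rm tor})\hookrightarrow H^0(C,\nN_f)\to H^0(Z,\nN_f|_Z)$ still vanishes. The cleanest route: at a point $q\in Z$, the reducedness of $Z=D\times_SC$ forces $f$ to be transverse to $D$ at $q$, in particular unramified at $q$, so $\nN_f$ is locally free near $q$ and $\nN_f^{\rm tor}$ vanishes in a neighborhood of $q$; therefore a section of $\nN_f^{\rm tor}$, having support disjoint from $Z$, restricts to $0$ on $Z$, and hence $d\psi$ kills it.

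I would organize the write-up as: (i) check $T$-flatness of $Z_T$ and well-definedness/linearity of $d\psi$; (ii) identify $d\psi$ with restriction $H^0(C,\nN_f)\to H^0(Z,\nN_f|_Z)$ under Lem.~\ref{cl:def1}; (iii) observe $\Supp(\nN_f^{\rm tor})\subset\{\text{ramification/nonimmersion points of }f\}$; (iv) use reducedness of $Z$ to conclude $Z\cap\Supp(\nN_f^{\rm tor})=\varnothing$; (v) conclude $d\psi(H^0(C,\nN_f^{\rm tor}))=0$. The main obstacle I anticipate is step (iv): one must argue carefully that if $f$ ramifies at $q$ (equivalently $\nN_f^{\rm tor}_q\neq0$) then $D\times_SC$ cannot be reduced at $q$ unless $q\notin D\times_SC$ — this requires relating ramification of $f$ to the scheme structure of the fiber product, which one does by writing local coordinates: if $f$ has local form $t\mapsto(t^k,0,\dots)$ with $k\geq2$ and $D=\{x_1=0\}$, then $Z$ is locally $\Spec\CC[t]/(t^k)$, nonreduced; the general $D$ through $f(q)$ is handled by choosing coordinates so that $D$ is a coordinate hyperplane, or by noting that reducedness of $Z$ at $q$ means the pullback $f^*\mathfrak m_{D,f(q)}$ generates the maximal ideal of the local ring $\oO_{C,q}$, which is exactly the statement that $f$ is transverse to $D$, forcing $f$ to be immersive at $q$. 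Once this local dictionary is in place the rest is immediate.
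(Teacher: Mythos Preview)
Your proposal is correct and follows essentially the same route as the paper: verify $T$-flatness of $Z_T$ via the local criterion (one nonzerodivisor equation), then use that $Z$ reduced forces $Z\cap\Supp(\nN_f^{\rm tor})=\varnothing$ so that $d\psi$ factors through sections over the open complement $U=C\setminus\Supp(\nN_f^{\rm tor})$, where the torsion dies. The paper simply asserts the disjointness $Z\cap\Supp(\nN_f^{\rm tor})=\varnothing$ and phrases the factorization through $\Def^1(U,f|_U)=(\nN_f/\nN_f^{\rm tor})(U)$ rather than through restriction to $Z$, but your local chain-rule argument for step~(iv) is exactly what justifies that assertion.
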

\begin{proof}
To prove $d\psi$ is well defined, it suffices to show that $Z_T$ is
$T$-flat. Let $0\in T$ be the closed point, $q\in Z\subset Z_T$ a
preimage of $0$, and $h=0$ a local equation of $D$ at $f(q)$. Then there
exists an exact sequence $0\to \oO_{C_T,q}\to \oO_{C_T,q}\to
\oO_{Z_T,q}\to 0$ where the first map $m_h$ is the multiplication by
$f_T^*(h)$.  Also, $m_h\otimes k(0)\:\oO_{C,q}\to \oO_{C,q}$ is
injective, since the locus of zeroes of $f^*(h)$ in $C$ is of
codimension 1, and so $f^*(h)\in \oO_{C,q}$ is not a zero-divisor. Thus,
$\oO_{Z_T,q}$ is flat by the local criterion of flatness
\cite[Cor. 5.7]{SGA1}.  Thus $d\psi$ is well defined.

As $Z$ is reduced, $Z\bigcap\Supp(\nN_f^{\rm tor}) =\emptyset$. Set
$U:=C\setminus \Supp(\nN_f^{\rm tor})$. Then $d\psi$ factors through $\Def^1(U,
f|_U)=\nN_f(U)=\big(\nN_f/\nN_f^{\rm tor}\big)(U)$.  Thus
$d\psi(H^0(C,\nN_f^{\rm tor}))=0$.
\end{proof}

\begin{lemma}\label{cl:etneigh}
Let $W$ be an algebraic variety, $C_W\to W$ a flat family of reduced
curves, $\widetilde{C}_W\to C_W$ the normalization, and $Z_W\subset
\widetilde{C}_W$ a reduced closed subvariety quasi-finite over
$W$. Then there exists an \'etale morphism $U\to W$ and sections $s_i\:
U\to \widetilde{C}_U$ such that the following two conditions hold: \(1)
$C_U\to U$ is equinormalizable, i.e., $\widetilde{C}_U\to U$ is flat and
$\widetilde{C}_u\to C_u$ is the normalization for any $u\in U$; and \(2)
$Z_U\to U$ is \'etale and $Z_U=\cup_{i=1}^rs_i(U)$.
\end{lemma}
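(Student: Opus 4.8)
The plan is to shrink $W$ to a dense open subset on which the relevant morphisms are as nice as possible, and then to pass to a finite \'etale cover that completely splits $Z$. This is legitimate because the lemma only asks for \emph{some} \'etale $U\to W$, not a surjective one: the composite of a finite \'etale cover of a dense open subset of $W$ with the open immersion into $W$ is again \'etale. Treating the irreducible components of $W$ separately, I may also assume $W$ is integral. Note that any property I arrange over a dense open $W'\subseteq W$ will automatically persist over $U$ once $U\to W'$ is chosen \'etale, provided that property is stable under \'etale base change; so conditions (1) and (2) can be treated independently and then combined.

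First I would establish condition~(1). By generic flatness there is a dense open $W'\subseteq W$ over which the normalization $\widetilde C_W\to W$ of the \emph{total space} $C_W$ is flat. Since $W'$ is reduced, the simultaneous-normalization theorem for flat families of reduced curves---the implication ``total-space normalization flat over the base'' $\Rightarrow$ ``equinormalizable''; see Teissier \cite{Te76}---guarantees that the finite birational morphism $\widetilde C_{W'}\to C_{W'}$ restricts on each fibre to the normalization $\widetilde C_u\to C_u$. This is the one genuinely nontrivial input, and the hypothesis that the $C_u$ are reduced enters here: a priori the normalization of the total space has no fibrewise meaning, and it is Teissier's theorem that converts flatness of $\widetilde C_{W'}$ into fibrewise normality. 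Equinormalizability is then preserved under any further \'etale base change, since such a base change preserves flatness and commutes with normalization (the pullback of $\widetilde C_{W'}$ is smooth over the normal scheme $\widetilde C_{W'}$, hence normal, and remains finite and birational onto the pullback of $C_{W'}$).

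Next I would establish condition~(2) at the cost of shrinking $W'$ further. Over $W'$ the morphism $Z_{W'}\to W'$ is reduced and quasi-finite; discarding the (nowhere dense) images of the components of $Z_{W'}$ that are not dominant over $W'$, I may assume it is equidimensional over $W'$. As we are in characteristic $0$, generic flatness and generic smoothness then make $Z_{W'}\to W'$ flat and unramified after another shrinking, hence \'etale; and applying Zariski's Main Theorem, writing $Z_{W'}\into\overline Z\to W'$ with $\overline Z$ the closure of $Z_{W'}$ and $\overline Z\to W'$ finite, the difference $\overline Z\setminus Z_{W'}$ is a proper closed subset of $\overline Z$ of dimension $<\dim Z_{W'}=\dim W'$, so its (closed) image in $W'$ is nowhere dense, and over the complement $Z_{W'}=\overline Z$ is finite. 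Thus, shrinking once more, $Z_{W'}\to W'$ is finite \'etale, say of degree $r$; now the standard induction---$Z_{W'}\times_{W'}Z_{W'}\to Z_{W'}$ has the diagonal as a section, splitting off one sheet, and one recurses over $Z_{W'}$---produces a finite \'etale surjection $U\to W'$ with $Z_{W'}\times_{W'}U\cong\coprod_{i=1}^r U$. Then $U\to W'\into W$ is \'etale, condition~(1) holds over $U$ by the previous paragraph, and $Z_U=Z_{W'}\times_{W'}U$ is the disjoint union of the $r$ tautological sections $s_i\:U\to Z_U\subseteq\widetilde C_U$, which is condition~(2). The only real obstacle is the Teissier input in the first step; everything else is generic flatness, generic smoothness in characteristic $0$, Zariski's Main Theorem, and the elementary splitting of a finite \'etale cover, all of which are compatible with the successive shrinkings precisely because $U\to W$ is not required to be surjective.
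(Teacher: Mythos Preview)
Your proof is correct and follows the same overall architecture as the paper's: shrink $W$ so that both conditions hold over a dense open, then pass to a finite \'etale cover splitting $Z$. The differences are in the details of each step.

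For condition~(1), the paper avoids Teissier's simultaneous-normalization theorem altogether. Instead it argues directly that the generic fibre $\widetilde C_\eta$ is normal (normalization commutes with localization), hence geometrically normal in characteristic~$0$, hence smooth; so $\widetilde C_{U_0}\to U_0$ is smooth over a dense open $U_0$, and then each $\widetilde C_u\to C_u$ is finite with smooth source onto a reduced curve, hence the normalization. Your route via Teissier is valid but invokes a deeper result where an elementary generic-smoothness argument suffices.

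For condition~(2), you are actually more careful than the paper. The paper simply asserts that $Z_W\to W$ is finite and proceeds; this is justified in the intended application (where $C_W\to W$ is projective, so $\widetilde C_W\to W$ is proper and a closed quasi-finite subscheme is finite), but is not immediate from the hypotheses as stated. Your use of Zariski's Main Theorem to pass from quasi-finite to finite after shrinking closes this gap cleanly. The splitting of the resulting finite \'etale cover is handled identically in both arguments.
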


\begin{proof}
The generic fiber $\widetilde{C}_\eta$ is normal since normalization
commutes with arbitrary localizations. Then it is geometrically normal,
since the characteristic is zero; and hence $\widetilde{C}_\eta\to\eta$
is smooth by flat descent. Then $\widetilde{C}_W\to W$ is generically
smooth by generic flatness theorem, i.e., there exists an open dense
subset $U_0\subset W$ such that $\widetilde{C}_{U_0}\to U_0$ is
smooth. In particular, $\widetilde{C}_{U_0}\to U_0$ is flat and has
normal fibers. But, $\widetilde{C}_u\to C_u$ is finite for any $u\in
U_0$, and hence the normalization. Furthermore, for any \'etale map
$U\to U_0$, the family $C_U\to U$ is equinormalizable since
normalization commutes with \'etale base changes.

The morphism $Z_W\to W$ is finite, and $Z_W$ is reduced. Thus,
$Z_\eta\to \eta$ is finite and \'etale since the characteristic is
zero. Hence, after shrinking $U_0$, we may assume that $Z_{U_0}\to U_0$
is finite and \'etale. Then there exists an \'etale morphism $U\to U_0$
such that $Z_U$ is the disjoint union of $\deg(Z_\eta\to\eta)$ copies of
$U$ and the map $Z_U\to U$ is the natural projection. Hence $U$ is as
needed.
\end{proof}
\subsection{The results}
\begin{proposition}\label{prop:dimbd}
Let $W\subseteq |\lL|^\delta$ be an irreducible
subvariety, $C_W\to W$ the tautological family of curves,
$\widetilde{C}_W\to C_W$ the normalization, $f_W\:\widetilde{C}_W\to
S$ the natural morphism, and $0\in W$ a {\em general} closed
point.  Assume that $C_0$ is reduced.

\(1) Then there  exists a natural embedding $\Tt_0(W)\into
H^0(\widetilde{C}_0,\nN_{f_0}/\nN_{f_0}^{\rm tor})$.

\(2) If $-K.C\ge 1$ for any irreducible component $C\subseteq C_0$, then
\begin{equation}\label{eq:dimineq} \dim(W)\le
h^0(\widetilde{C}_0,\nN_{f_0}/\nN_{f_0}^{\rm tor})\le
-K.C_0+p_g(C_0)-1.
\end{equation}

\(3) If \eqref{eq:dimineq} is equality and $-K.C>1$ for an irreducible
component $C$ of $C_0$, then $C$ is immersed.

\(4) If \eqref{eq:dimineq} is equality and $-K.C>1$ for any irreducible
component $C$ of $C_0$, then $\nN_{f_0}$ is invertible and $\Tt_0(W)\to
H^0(\widetilde{C}_0,\nN_{f_0})$ is an isomorphism.
\end{proposition}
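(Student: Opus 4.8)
The plan is to run the deformation-theoretic machinery of Lemmas~\ref{cl:def1}--\ref{cl:etneigh} on the tautological family, using the generic point of $W$ to guarantee good behavior. First I would apply Lemma~\ref{cl:etneigh} to $W$ with $Z_W$ taken to be the preimage in $\widetilde{C}_W$ of a suitable finite set of points (the singular locus of the generic fiber together with, if needed, a few smooth marked points): after an \'etale base change $U\to W$, the family $\widetilde{C}_U\to U$ is equinormalizable and the chosen points spread out into disjoint sections $s_i\:U\to\widetilde{C}_U$. Equinormalizability gives a Kodaira--Spencer-type map $\Tt_0(W)\to\Def^1(\widetilde{C}_0,f_0)$ (or, with the sections, into $\Def^1(\widetilde{C}_0,f_0;\underline{p})$), and Lemma~\ref{cl:def1} identifies $\Def^1(\widetilde{C}_0,f_0)$ with $H^0(\widetilde{C}_0,\nN_{f_0})$. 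The key point for part~(1) is that this map lands in $H^0(\widetilde{C}_0,\nN_{f_0}/\nN_{f_0}^{\rm tor})$ and is injective: injectivity follows because, at a general point of $W$, distinct curves of $|\lL|$ give distinct divisors, so the map from $W$ to $|\lL|$ (hence the composite recording the abstract deformation plus the map to $S$) is unramified; landing in the torsion-free quotient uses Lemma~\ref{cl:resdef}, since the deformation of $\widetilde{C}_0$ inside $|\lL|$ is constrained by restriction to a transverse curve $D\subset S$ through the relevant points, killing the torsion part of $\nN_{f_0}$.

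For part~(2), the left inequality is immediate from (1). The right inequality is a Riemann--Roch estimate: on each component $\widetilde{C}_i$ of $\widetilde{C}_0$, the invertible sheaf $\nN_{f_i}/\text{tor}$ has degree $-K.C_i+2p_g(C_i)-2 - (\text{torsion length on }C_i)$ by adjunction on $S$ (the normal sheaf of an immersion into a surface is $\omega_{\widetilde{C}_i}\otimes f_i^*\omega_S^{-1}$ up to the ramification correction), so $h^0\le \deg + 1$ componentwise once $h^1$ is controlled; the hypothesis $-K.C_i\ge1$ forces the degree on each component to be at least $\dots$ nonnegative, and summing over components with the genus formula $p_g(C_0)=\sum p_g(C_i)-\#\{\text{nodes of }\widetilde{C}_0\}+\dots$ collapses the bound to $-K.C_0+p_g(C_0)-1$. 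I would be careful that $h^1$ of the torsion-free quotient vanishes on each component of positive degree and handle the degree-$0$ or reducible cases by the standard trick of peeling off one component at a time (as in the proof of Lemma~\ref{lecdB}), which is why the bound is stated additively.

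For parts~(3) and~(4), equality in \eqref{eq:dimineq} forces equality componentwise in the Riemann--Roch estimates, which in turn forces each torsion contribution along a component with $-K.C>1$ to vanish (any ramification point of $f$ would drop the degree, hence drop $h^0$ strictly once $-K.C_i\ge2$ keeps us in the regime $h^1=0$); that is exactly the statement that such a component is immersed, giving (3). When \emph{every} component has $-K.C>1$, every component is immersed and carries no torsion, so $\nN_{f_0}$ is already torsion-free, i.e.\ invertible; and since both sides of the injection in (1) then have the same dimension (equality in \eqref{eq:dimineq} plus $\nN_{f_0}/\nN_{f_0}^{\rm tor}=\nN_{f_0}$), the map $\Tt_0(W)\to H^0(\widetilde{C}_0,\nN_{f_0})$ is an isomorphism, giving (4). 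The main obstacle I anticipate is part~(1): proving injectivity of the Kodaira--Spencer map and, especially, that its image avoids the torsion subsheaf requires combining the equinormalizability from Lemma~\ref{cl:etneigh} with the restriction argument of Lemma~\ref{cl:resdef} applied to enough auxiliary curves $D$ to separate all the branch points — setting this up cleanly (choosing $D$, checking $Z=D\times_S C$ is reduced and finite) is the delicate bookkeeping, whereas the numerical estimates in (2)--(4) are routine adjunction-plus-Riemann--Roch once (1) is in hand.
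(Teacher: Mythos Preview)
Your outline for parts (2)--(4) is essentially the paper's argument: reduce to a single component, embed $\nN_{f_0}/\nN_{f_0}^{\rm tor}$ into an invertible sheaf $\fF$ of the right degree, and use Riemann--Roch plus $-K.C\ge1$ (resp.\ $>1$) to bound $h^0$ (resp.\ force the torsion to vanish). The paper does this slightly more cleanly by invoking additivity to reduce to the irreducible case rather than peeling off components, but the content is the same.

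The gap is in part (1), and it is exactly where you suspected. Your injectivity argument --- ``distinct curves of $|\lL|$ give distinct divisors, so the map from $W$ to $|\lL|$ is unramified'' --- is vacuous: $W\subset|\lL|$ is already an inclusion, and that says nothing about whether $\Tt_0(W)\to H^0(\widetilde{C}_0,\nN_{f_0}/\nN_{f_0}^{\rm tor})$ is injective. A nonzero tangent vector to $|\lL|$ at $C_0$ could, a priori, induce a deformation of $(\widetilde{C}_0,f_0)$ lying in the torsion part (or even zero). Your idea of using Lemma~\ref{cl:resdef} with auxiliary curves $D$ ``through the relevant points'' does not fix this: passing $D$ through branch points helps kill torsion in the target, but does not give you injectivity on the source.

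The paper's device is to choose a \emph{single} smooth very ample curve $D\subset S$ with the global property $h^0(S,\lL(-D))=0$. This makes the restriction map $|\lL|\hookrightarrow|\lL\otimes\oO_D|$ injective, hence $\Tt_0(W)\hookrightarrow\Tt_{Z_0}(|\lL\otimes\oO_D|)\hookrightarrow\Def^1(Z_0,g_0)$ is injective along the bottom of a commutative square; the top row factors through $H^0(\widetilde{C}_0,\nN_{f_0}/\nN_{f_0}^{\rm tor})$ by Lemma~\ref{cl:resdef}, and injectivity of the composite forces injectivity of $\Tt_0(W)\to H^0(\widetilde{C}_0,\nN_{f_0}/\nN_{f_0}^{\rm tor})$. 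So the missing idea is: choose $D$ not for its local incidence with $C_0$, but so that restriction to $D$ already separates all of $|\lL|$.
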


\begin{proof}
Pick a smooth irreducible closed curve $D\subset S$ in a very ample
linear system such that $h^0(S,\lL(-D))=0$. Then $D\cap C_w$ is finite
for any $w\in W$, and is reduced for almost all $w\in W$ by Bertini's
theorem. In particular, $D\cap C_0$ is reduced since $0\in W$ is
general. Hence the projection $Z_W:=\widetilde{C}_W\times_S D\to W$ is
finite, since it is a projective morphism with finite fibers. Let
$g_0\:Z_0\to D$ be the closed immersion. Then, by Lem.\,\ref{cl:resdef}
and Lem.\,\ref{cl:etneigh}, there exists a commutative diagram
\begin{equation}\UseTips
\xymatrix{
\Tt_0(W)\ar@{^(->}[d]\ar[r] & \Def^1(\widetilde{C}_0, f_0)\ar[d]\ar[r]
 & H^0(\widetilde{C}_0,\nN_{f_0}/\nN_{f_0}^{\rm tor})\ar[d] \\
\Tt_{Z_0}(|\lL\otimes \oO_D|)\ar@{^(->}[r] & \Def^1(Z_0,g_0)\ar@{=}[r] &
H^0(Z_0,\nN_{g_0})
}
\end{equation}
where $\Tt_0(W)\to \Tt_{Z_0}(|\lL\otimes \oO_D|)$ is injective since
$W\subseteq |\lL|\subseteq |\lL\otimes \oO_D|$ by the choice of $D$; and
$\Tt_{Z_0}(|\lL\otimes \oO_D|)\to \Def^1(Z_0,g_0)$ is injective since
$\Tt_{Z_0}(|\lL\otimes \oO_D|)$ is a subspace of the space of
first-order embedded deformations of $g_0(Z_0)\subset D$, and the latter
is canonically isomorphic to $\bigoplus_{p\in
g_0(Z_0)}\Tt_p(D)=\Def^1(Z_0,g_0)$. Thus, the composition $\Tt_0(W)\to
H^0(Z_0,\nN_{g_0})$ is injective, and hence so is $\Tt_0(W)\to
H^0(\widetilde{C}_0,\nN_{f_0}/\nN_{f_0}^{\rm tor})$ as asserted by (1).

(2) The first inequality in \eqref{eq:dimineq} follows from (1). Since
both sides of the second inequality in \eqref{eq:dimineq} are additive
with respect to unions, we may assume that $C_0$ is irreducible. Let
$0\to \nN_{f_0}/\nN_{f_0}^{\rm tor}\to\fF$ be an invertible extension
such that $c_1(\fF)=c_1(\nN_{f_0})$. By the assumption,
$c_1(\fF)=c_1(\nN_{f_0})=c_1(\omega_{\widetilde{C}_0})-K.C_0>
c_1(\omega_{\widetilde{C}_0})$. Thus,
$h^0(\widetilde{C}_0, \nN_{f_0}/\nN_{f_0}^{\rm tor})\le
h^0(\widetilde{C}_0, \fF)=c_1(\fF)+1-p_g(C_0)=-K.C_0+p_g(C_0)-1$ by
Riemann--Roch theorem, since
$h^0(\widetilde{C}_0,\fF^\vee\otimes\omega_{\widetilde{C}_0})=0$; and
hence \eqref{eq:dimineq} holds.

(3) Once again, we may assume that $C_0$ is irreducible. To prove that
$C_0$ is immersed, it is sufficient to show that $\nN_{f_0}^{\rm
tor}=0$. Assume to the contrary that $\nN_{f_0}^{\rm tor}\ne 0$. Pick an
invertible extension $0\to \nN_{f_0}/\nN_{f_0}^{\rm tor}\to\fF$ with
$c_1(\fF)=c_1(\nN_{f_0})-1$. By the assumption,
$c_1(\fF)=c_1(\nN_{f_0})-1=c_1(\omega_{\widetilde{C}_0})-K.C_0-1>
 c_1(\omega_{\widetilde{C}_0})$. Thus,
$h^0(\widetilde{C}_0, \nN_{f_0}/\nN_{f_0}^{\rm tor})\le
h^0(\widetilde{C}_0, \fF)=c_1(\fF)+1-p_g(C_0)=-K.C_0+p_g(C_0)-2$ by
Riemann--Roch theorem, which is a contradiction.

(4) Note that by (3) we have: $\nN_{f_0}^{\rm tor}=0$, and hence
$\nN_{f_0}$ is invertible. Then by (2),
$\dim(\Tt_0(W))=h^0(\widetilde{C}_0, \nN_{f_0}/\nN_{f_0}^{\rm
tor})=h^0(\widetilde{C}_0, \nN_{f_0})$. Thus, (1) implies that
$\Tt_0(W)\into
H^0(\widetilde{C}_0,\nN_{f_0})=H^0(\widetilde{C}_0,\nN_{f_0}/\nN_{f_0}^{\rm
tor})$ is an isomorphism.
\end{proof}

\begin{remark}\label{rem:dimcodim}
By definition,  $\delta:=p_a(C_0)-p_g(C_0)$.  Hence,
if $S$ is rational and $-K.C_0\ge 1$, then the adjunction formula and
Lem.\,\ref{lecdB} yield
\begin{equation*}
 -K.C_0+p_g(C_0)-1=C_0.(C_0-K)/2-\delta=\dim|\lL|-\delta.
\end{equation*}
\end{remark}

\begin{proposition}\label{prop:nodality} Fix a point $q\in S$, and a curve
$E\subset S$. Let $W\subseteq |\lL|^\delta$ be an irreducible
subvariety, $C_W\to W$ the tautological family of curves,
$\widetilde{C}_W\to C_W$ the normalization, $f_W\:\widetilde{C}_W\to
S$ the natural morphism, and $0\in W$ a {\em general} closed
point. Assume that $C_0$ is reduced and immersed,
$\dim(W)=-K.C_0+p_g(C_0)-1$, and $-K.C\ge 1$ for any irreducible
component $C$ of $C_0$.

\(1) If $-K.C>1$ for any irreducible component $C$ of $C_0$, then $q\notin
C_0$.

\(2) Let $q_0\in C_0$ be a point of multiplicity at least three, and
$p^1,p^2,p^3\in \widetilde{C}_0$ three distinct preimages of
$q_0$. Then there exists an irreducible component $C\subseteq C_0$ such
that $-K.C\le\left|\widetilde{C}\cap\{p^1,p^2,p^3\}\right|$.

\(3) Let $q_0\in C_0$ be a singular point with at least two tangent
branches, and $p^1,p^2\in \widetilde{C}_0$ the preimages of $q_0$ on
these branches. Then there exists an irreducible component $C\subseteq
C_0$ such that $-K.C\le\left|\widetilde{C}\cap\{p^1,p^2\}\right|$.

\(4) If $-K.C>1$ for any irreducible component $C$ of $C_0$, then any
branch of $C_0$ intersects $E$ transversally. Furthermore, if $C_0^{\rm
sing}\cap E\ne\emptyset$, then there exists an irreducible component
$C\subseteq C_0$ such that $C^{\rm sing}\cap E\ne\emptyset$ and
$-K.C=2$.
\end{proposition}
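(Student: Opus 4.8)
The plan is to use the identification of the Zariski tangent space $\Tt_0(W)$ with $H^0(\widetilde{C}_0,\nN_{f_0})$, together with a fibrewise ``displacement'' analysis of $\nN_{f_0}$ along the branches of $C_0$ over the distinguished points. Since $C_0$ is immersed, $\nN_{f_0}^{\rm tor}=0$, so $\nN_{f_0}$ is invertible; then Prp.\,\ref{prop:dimbd}(1)--(2) together with the hypothesis $\dim(W)=-K.C_0+p_g(C_0)-1$ force $h^0(\widetilde{C}_0,\nN_{f_0})=\dim(W)=\dim\Tt_0(W)$ and make the embedding of Prp.\,\ref{prop:dimbd}(1) an isomorphism $\Tt_0(W)=H^0(\widetilde{C}_0,\nN_{f_0})$; when $-K.C>1$ on every component this is just Prp.\,\ref{prop:dimbd}(4). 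Two numerical facts will be used repeatedly. On a component $\widetilde{C}_i$ of $\widetilde{C}_0$, writing $k_i:=-K.C_i$ and $g_i:=p_g(C_i)$, the line bundle $\nN_{f_0}|\widetilde{C}_i$ has degree $k_i+2g_i-2$; hence if $k_i\ge m+1$, then $(\nN_{f_0}|\widetilde{C}_i)(-Z)$ has degree $\ge 2g_i-1$ for any effective divisor $Z$ of degree $m$ on $\widetilde{C}_i$, so its $H^1$ vanishes and $H^0(\widetilde{C}_i,\nN_{f_0}|\widetilde{C}_i)$ surjects onto the sum of the fibres of $\nN_{f_0}$ at any $m$ given distinct points of $\widetilde{C}_i$. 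And, since $C_0$ is immersed, there is a canonical identification $\nN_{f_0}\otimes k(p)=T_{f_0(p)}S/df_0(T_p\widetilde{C}_0)$ for $p\in\widetilde{C}_0$.

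Next, the displacement formula. Write $\ell_p:=df_0(T_p\widetilde{C}_0)\subset T_{f_0(p)}S$ for the tangent line to the branch at $p$. Suppose a section $s\:W\to\widetilde{C}_W$ with $s(0)=p$ is at hand, and let $v\in\Tt_0(W)$ have image $\sigma\in H^0(\widetilde{C}_0,\nN_{f_0})$. Then the morphism $f_W\circ s\:W\to S$ has differential $\partial_v(f_W\circ s)\in T_{f_0(p)}S$, and, by factoring $\Tt_0(W)$ through $\Def^1(\widetilde{C}_0,f_0;p)$ and using Lem.\,\ref{cl:def1}, one checks that its class modulo $\ell_p$ equals $\sigma(p)$. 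Consequently, if sections $s^1,\dots,s^r$ meet the distinguished points $p^1,\dots,p^r$ at $0$ and $f_W\circ s^1,\dots,f_W\circ s^r$ satisfy some incidence along $W$ (they coincide, or land in $E$, etc.), then at $0$ the tuple $(\sigma(p^1),\dots,\sigma(p^r))$ is forced, for \emph{every} $\sigma$, to lie in a proper linear subspace $L$ of $\bigoplus_j\nN_{f_0}\otimes k(p^j)$ determined by the lines $\ell_j$, $T_{q_0}S$ (and, in the $E$-incidence case, $T_{q_0}E$). Because $T_{q_0}S$ is a plane, $L$ is proper in each of the situations below, and it moreover avoids the coordinate lines in the tangency and $E$-transversality cases, thanks to distinctness of the relevant tangent lines. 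The sections $s^j$ are obtained by spreading out the given configuration: it persists for the general member of $W$ since $0$ is general, and then an \'etale base change via Lem.\,\ref{cl:etneigh} converts the relevant quasi-finite locus in $\widetilde{C}_W$ into sections (after which the base is renamed $W$).

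Each part now follows by contradicting surjectivity of an evaluation map. (1) If $q\in C_0$ then $q\in C_w$ for all $w$, a closed condition holding at the general point $0$; so $f_W^{-1}(q)\to W$ is finite surjective and, after an \'etale base change, has a section $s$ with $f_W\circ s\equiv q$; then $\partial_v(f_W\circ s)\equiv0$ forces $\sigma(p)=0$ for all $\sigma$, where $p:=s(0)$, contradicting global generation of $\nN_{f_0}$ along the component through $p$ (which has $-K.C\ge2$). (2) With sections $s^1,s^2,s^3$ and $f_W\circ s^1=f_W\circ s^2=f_W\circ s^3$, the tuple $(\sigma(p^1),\sigma(p^2),\sigma(p^3))$ lies in a proper subspace of $\bigoplus_{j=1}^3\nN_{f_0}\otimes k(p^j)$ for every $\sigma$; if no component $C$ satisfied $-K.C\le|\widetilde{C}\cap\{p^1,p^2,p^3\}|$, then every component would have $-K.C_i\ge|\widetilde{C}_i\cap\{p^1,p^2,p^3\}|+1$, whence by the first paragraph the evaluation map $H^0(\widetilde{C}_0,\nN_{f_0})\to\bigoplus_{j=1}^3\nN_{f_0}\otimes k(p^j)$ would be onto---impossible. (3) Same, now with two \emph{tangent} branches, so $\ell_1=\ell_2$ and the constraint becomes $\sigma(p^1)=\sigma(p^2)$ in the common fibre (the diagonal, which is not a coordinate line); failure of the conclusion puts $-K.C\ge2$ on any component carrying exactly one of $p^1,p^2$, and $-K.C\ge3$ on one carrying both, either of which produces a $\sigma$ with $\sigma(p^1)\ne\sigma(p^2)$. (4) First assertion: were a branch of the general $C_w$ non-transversal to $E$, a section $s$ tracking it would have $\ell_{s(0)}\subseteq T_{f_0(s(0))}E$ (or $f_W\circ s$ constant at a singular point of $E$), so the displacement formula gives $\sigma(s(0))=0$ for all $\sigma$, again contradicting global generation. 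Second assertion: given $q_0\in C_0^{\rm sing}\cap E$, use part (3) to exclude two mutually tangent branches at $q_0$, then the first assertion to see both branches meet $E$ transversally, so $\ell_1,\ell_2,T_{q_0}E$ are three distinct lines and the constraint confines $(\sigma(p^1),\sigma(p^2))$ to a line that is not a coordinate axis; this is impossible if $p^1,p^2$ lie on distinct components (both of which have $-K.C\ge2$), and impossible if they lie on one component $C$ with $-K.C\ge3$; hence $-K.C=2$ for that component, which meets $E$ at $q_0\in C^{\rm sing}$.

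The conceptual content is in the first two paragraphs; the real work, and the only genuine obstacle, is bookkeeping: one must check in each case that the stated incidence really persists for the general member of $W$ (so that Lem.\,\ref{cl:etneigh} applies and produces the sections $s^j$ with the right values at $0$), and carry out the elementary plane-geometry case analysis that pins down exactly when the relevant concurrency locus contains a coordinate line, so that the hypotheses ``$-K.C$ large'' genuinely contradict it. These steps are routine but must be done component by component and configuration by configuration.
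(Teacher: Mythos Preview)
Your proposal is correct and follows essentially the same approach as the paper's proof. Your ``displacement formula'' is precisely the content of the paper's commutative diagram factoring $\Tt_0(W)\to\Def^1(\widetilde{C}_0,f_0;\underline{p})\to\bigoplus_j T_{q_0}S\to\bigoplus_j\nN_{f_0}\otimes k(p^j)$, and your componentwise surjectivity criterion is the contrapositive of the paper's observation that non-surjectivity of the evaluation map $\gamma$ forces $h^1(\widetilde{C}_0,\nN_{f_0}(-\sum p^{i_j}))\ne 0$, hence some component with $-K.C\le|\widetilde{C}\cap\{p^{i_j}\}|$.

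The only stylistic difference is that the paper argues uniformly: once $\Im(\gamma)$ is shown to lie in $\Im(\pi\circ\Delta)$ (at most $2$-dimensional, or $1$-dimensional when branches are tangent or constrained to $E$), non-surjectivity is immediate by dimension count, and the $h^1$ criterion directly produces the offending component without any ``coordinate-axis'' bookkeeping. Your more explicit tracking of which coordinate lines the constraint subspace avoids is not wrong, but it is unnecessary: since you already establish full surjectivity of $\gamma$ under the negated hypotheses, \emph{any} proper subspace yields the contradiction. In particular, in part~(4) you need not invoke part~(3) to exclude tangent branches nor verify the three lines are pairwise distinct; the paper simply notes $\dim\Im(\gamma)\le 1<2$ and reads off $-K.C=2$ with $p^1,p^2\in\widetilde{C}$ from $-K.C\ge 2$ and $-K.C\le|\widetilde{C}\cap\{p^1,p^2\}|$.
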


\begin{proof}
First, note that $\nN_{f_0}$ is invertible since $C_0$ is immersed. Thus,
the embedding $\Tt_0(W)\into
H^0(\widetilde{C}_0,\nN_{f_0}/\nN_{f_0}^{\rm tor})=H^0(\widetilde{C}_0,
\nN_{f_0})$ of Prp.\,\ref{prop:dimbd} (1) is an isomorphism by
Prp.\,\ref{prop:dimbd} (2). Then $h^0(\widetilde{C}_0, \nN_{f_0})=-K.C_0+p_g(C_0)-1=\chi(\nN_{f_0})$, and hence $h^1(\widetilde{C}_0,
\nN_{f_0})=0$. Let $A_W\subset C_W$ be the locus of
singular points of the fibers $C_W\to W$. Set
$Z_W:=\nu^{-1}(A_W)\cup f_W^{-1}(q\cup E)\subset \widetilde{C}_W$, where
$\nu\:\widetilde{C}_W\to C_W$ is the normalization. Then $Z_W\subset
\widetilde{C}_W$ is locally closed, and $Z_W\to W$ has finite
fibers. Thus, by Lem.\,\ref{cl:etneigh}, there exists an \'etale
neighborhood $U$ of $0$ and disjoint sections $s_i\:U\to
\widetilde{C}_U$ such that $Z_U=\cup_{i=1}^r s_i(U)$. Set
$p^i:=s_i(0)$. Then the isomorphism $\Tt_0(W)\to H^0(\widetilde{C}_0,
\nN_{f_0})=\Def^1(\widetilde{C}_0,f_0)$ factors through
$\Def^1(\widetilde{C}_0,f_0; \underline{p})$ for any $1\le i_1<\cdots
<i_m\le r$, where $\underline{p}=(p^{i_1},\dotsc,p^{i_m})$.

Consider the exact sequence of Lem.\,\ref{cl:def1}
$$0\to \oplus_{j=1}^m\left(T_{\widetilde{C}_0}\otimes k(p^{i_j})\right)
\to {\Def}^1(\widetilde{C}_0,f_0; \underline{p})\to
 H^0(\widetilde{C}_0, \nN_{f_0})\to 0,$$
the restriction map $\gamma\: H^0(\widetilde{C}_0,
\nN_{f_0})\to\oplus_{j=1}^m\left(\nN_{f_0}\otimes k(p^{i_j})\right)$,
and the forgetful map $$\beta\:{\Def}^1(\widetilde{C}_0,f_0;
\underline{p})\to
\oplus_{j=1}^m{\Def}^1(p^{i_j},f_0|_{p^{i_j}})=\oplus_{j=1}^m\left(f_0^*T_S\otimes
k(p^{i_j})\right).$$
Then the following diagram is commutative:
\begin{equation}\UseTips\label{eq:diagdif}
\xymatrix{
\Tt_0(W)\ar[r]\ar@{=}[d] &\Def^1(\widetilde{C}_0,f_0;
 \underline{p})\ar[d]\ar[r]^-\beta & \bigoplus_{j=1}^m\left(f_0^*T_S
\otimes k(p^{i_j})\right)\ar[d]^\pi\\
\Tt_0(W)\ar[r]^-\sim & H^0(\widetilde{C}_0, \nN_{f_0})\ar[r]^-\gamma &
\bigoplus_{j=1}^m\left(\nN_{f_0}\otimes k(p^{i_j})\right)
}
\end{equation}

From the long exact sequence of cohomology associated to the short exact
sequence of sheaves $0\to \nN_{f_0}(-\sum_{j=1}^m p^{i_j})\to
\nN_{f_0}\to\oplus_{j=1}^m\nN_{f_0}\otimes k(p^{i_j}) \to 0$ we obtain:
 $\Ker(\gamma)=H^0(\widetilde{C}_0, \nN_{f_0}(-\sum_{j=1}^m p^{i_j}))$,
and $\Coker(\gamma)\subseteq H^1(\widetilde{C}_0,
\nN_{f_0}(-\sum_{j=1}^m p^{i_j}))$. Thus, the map $\gamma$ is not
surjective if and only if $h^1(\widetilde{C}_0, \nN_{f_0}(-\sum_{j=1}^m
p^{i_j}))\ne 0$, since $h^1(\widetilde{C}_0,
\nN_{f_0})=0$. In particular, if $\gamma$ is not surjective then there
exists an irreducible component $C\subseteq C_0$ such that
$c_1(\nN_{f_0}(-\sum_{j=1}^m p^{i_j})|_{\widetilde{C}})\le
c_1(\omega_{\widetilde{C}})$, or, equivalently, $-K.C\le
|\widetilde{C}\cap \{p^{i_j}\}_{j=1}^m|$.

Let $q_0\in C_0$ be either a singular point, or a point of intersection
$C_0\cap E$, or $q_0=q$. Assume that $\nu(s_i(0))=q_0$ for $1\le i\le
m$. Since $0\in W$ is general, $\nu\circ s_i=\nu\circ s_j$ for all $1\le
i\le j\le m$. Set $i_j:=j$, and consider diagram~\eqref{eq:diagdif}.
For any $1\le j\le m$, the tensor product $f_0^*T_S\otimes k(p^j)$ is
canonically isomorphic to $\Tt_{q_0}(S)=\Def^1(q_0\to S)$, and $\beta$
factors through the diagonal map $\Delta\:\Tt_{q_0}(S)\to
\bigoplus_{j=1}^m\left(f_0^*T_S\otimes k(p^j)\right)$. Hence
$\Im(\gamma)\subseteq \Im(\pi\circ\Delta)$.

(1) Assume to the contrary that $q\in C_0$, and set $q_0:=q$. Without loss
of generality, $\nu(s_1(0))=q_0$. Set $m:=1$, and
consider diagram \eqref{eq:diagdif}. Then the image of $\Tt_0(W)$ in $\Def^1(p^1,f_0|_{p^1})$ is trivial since $q$ is fixed. Thus,
$\gamma$ is the zero map, and hence there exists an irreducible component $C\subseteq C_0$ such that $-K.C\le 1$, which is a contradiction.

(2) Assume that $q_0\in C_0$ is a singular point of multiplicity at
least three. Without loss of generality, $s_1(0), s_2(0), s_3(0)$ are
preimages of $q_0$. Set $m:=3$, and consider diagram
\eqref{eq:diagdif}. Then $\dim(\Im(\gamma))\le
\dim(\Im(\pi\circ\Delta))=2<3$, and hence $\gamma$ is not
surjective. Thus, there exists an irreducible component as asserted.

(3) Assume that $C_0$ has at least two tangent branches at
$q_0$. Without loss of generality, $s_1(0), s_2(0)$ are the preimages of
$q_0$ on the tangent branches. Set $m:=2$, and consider diagram
\eqref{eq:diagdif}. Then $\dim(\Im(\gamma))\le
\dim(\Im(\pi\circ\Delta))=1<2$, and hence $\gamma$ is not
surjective. Thus, there exists an irreducible component as asserted.

(4) Assume that $q_0\in C_0\cap E$. Then $q_0\notin E^{\rm sing}$ by
(1). Without loss of generality, $s_1(0)$ is a preimage of $q_0$. Assume
to the contrary that
$df_0(\Tt_{s_1(0)}(\widetilde{C}_0))=\Tt_{q_0}(E)$. Set $m:=1$, and
consider diagram \eqref{eq:diagdif}. The image of $\gamma$ belongs to
the image of $\Def^1(q_0\to E)=\Tt_{q_0}(E)\to \nN_{f_0}\otimes k(p^1)$,
which is zero. Thus, there exists an irreducible component $C\subset
C_0$ such that $-K.C\le 1$, which is a contradiction. Hence no branch of
$C_0$ is tangent to $E$. Assume now that $q_0\in C_0^{\rm
sing}$. Without loss of generality, $s_1(0)$ and $s_2(0)$ are preimages
of $q_0$. Set $m:=2$, and consider diagram \eqref{eq:diagdif}. The image
of $\gamma$ belongs to the image of
$\Tt_{q_0}(E)\to\oplus_{i=1}^2\left(\nN_{f_0}\otimes k(p^i)\right)$,
which is at most one-dimensional. Thus, $\gamma$ is not surjective, and
hence there exists an irreducible component $C\subseteq C_0$ such that
$-K.C\le |\widetilde{C}\cap \{p^1,p^2\}|\le 2$. However, $-K.C\ge 2$ by
the assumption. Hence $p^1,p^2\in \widetilde{C}$, $q_0\in C^{\rm sing}$,
and $-K.C=2$.
\end{proof}

\subsection{Conclusions and final remarks} \noindent\medskip

First, let us prove the assertion about the codimension in
Thm.\,\ref{ththreshold}: The upper bound follows easily from the fact
that the locus of equigeneric deformations in the space of all
deformations has codimension $\delta$, as explained at the very
beginning of the proof of Lem.\,\ref{prcodim}. The lower bound follows
from Prp.\,\ref{prop:dimbd} (2) and Rmk.\,\ref{rem:dimcodim} applied to
every irreducible component $W\subseteq |\lL|^\delta\setminus V$.

Second, let us prove the most difficult part of Prp.\,\ref{prNC}, namely
the nodality of a general curve in $|\lL|^\delta\setminus V$: Pick an
irreducible component $W\subseteq |\lL|^\delta\setminus V$, and let
$0\in W$ be a general closed point. Then $\dim(W)=-K.C_0+p_g(C_0)-1$ by
Thm.\,\ref{ththreshold} and Rmk.\,\ref{rem:dimcodim}. Furthermore, $C_0$
is immersed by Prp.\,~\ref{prop:dimbd} (3) and assumption (3) of
Thm.\,\ref{ththreshold}. By Prp.\,~\ref{prop:nodality} (2), if $C_0$ has
a point of multiplicity at least three, then we get a contradiction to
assumption (4), or (5), or (6), or (7), or (8) of
Prp.\,\ref{prNC}. Similarly, by Prp.\,~\ref{prop:nodality} (3), if $C_0$
has a singular point with at least two tangent branches, then we get a
contradiction to assumption (9) or (10) of Prp.\,\ref{prNC}. Thus, $C_0$
is nodal.

Third, note that Prp.\,~\ref{prop:dimbd} and Prp.\,\ref{prop:nodality}
imply few previously known results about families of curves on algebraic
surfaces such as \cite[Thm.\,2, p.\,220]{Z}, \cite[(3.1), p.\, 95]{AC},
\cite[(10.7), p.\,847]{ACG}, \cite[Prp.\,2.2,
p.\,355]{CH},\cite[Prp.\,8.1, p.\,74]{Va}, and \cite[Thm.\,2.8,
p.\,8]{Ty}.

Finally, let us mention that in positive characteristic
Prp.\,\ref{prop:dimbd} and Prp.\,\ref{prop:nodality} are no longer
true. It was shown in \cite{Ty11} that there exist $S, \lL, W$ as in the
Propositions such that: (a) for any \'etale morphism $U\to W$ the family
$C_U$ is not equinormalizable, (b) $\dim(W)=-K.C_0+p_g(C_0)-1$, and (c)
all curves $C_w$ are non-immersed, have tangent branches, and intersect
each other non-transversally. However, at least for toric surfaces $S$,
it was shown that the bound $\dim(W)\le-K.C_0+p_g(C_0)-1$ holds true in
arbitrary characteristic.

\bibliographystyle{amsplain}
%    Insert the bibliography data here.

\end{document}